\theoremstyle{plain}
\newtheorem{theorem}{Theorem}[section]
\newtheorem{corollary}[theorem]{Corollary}
\newtheorem{lemma}[theorem]{Lemma}
\theoremstyle{definition}
\newtheorem{example}[theorem]{Example}
\theoremstyle{remark}
\newtheorem{remark}[theorem]{Remark}
\newcommand{\A}{\mathcal{A}}
\newcommand{\F}{\mathbb{F}}
\newcommand{\R}{\mathbb{R}}
\newcommand{\scS}{\mathcal{S}}
\newcommand{\Z}{\mathbb{Z}}
\newcommand{\scH}{{\mathcal{H}}}
\newcommand{\M}{\mathcal{M}}
\newcommand{\lcm}{\operatorname{lcm}}
\newcommand{\quasi}{\operatorname{quasi}}
\newcommand{\DP}{{\mathcal{DP}}}
\newcommand{\SG}{{\mathcal{SG}}}
\newcolumntype{K}[1]{>{\centering\arraybackslash}p{#1}}
\begin{document}

\title[Characteristic quasi-polynomials of ideals]{Characteristic quasi-polynomials of ideals and signed graphs of classical root systems}

\date{\today}

\begin{abstract}
With a main tool is signed graphs, we give a full description of the characteristic quasi-polynomials of ideals of classical root systems ($ABCD$) with respect to the integer and root lattices. 
As a result, we obtain a full description of the characteristic polynomials of the toric arrangements defined by these ideals.
As an application, we provide a combinatorial verification to the fact that the characteristic polynomial of every ideal subarrangement factors over  the dual partition of the ideal in the classical cases.
 \end{abstract}

\author{Tan Nhat Tran}
\address{Tan Nhat Tran, Department of Mathematics, Hokkaido University, Kita 10, Nishi 8, Kita-Ku, Sapporo 060-0810, Japan.}
\email{trannhattan@math.sci.hokudai.ac.jp}


\subjclass[2010]{17B22 (Primary), 05A18 (Secondary)}

\keywords{Characteristic quasi-polynomial, classical root system, ideal, signed graph}

\date{\today}
\maketitle



\section{Introduction}
In recent years, the ``finite field method" for studying hyperplane arrangements have been developed, extended and put into practice. 
Roughly speaking, suppose that the real hyperplane arrangement $\A(\R)$ associated to a list $\A$ of elements in $\Z^\ell$ is given, we can take coefficients modulo a positive integer $q$ and get an arrangement $\A(\Z/q\Z)$ of subgroups in $(\Z/q\Z)^\ell$. 
The central theorem in the theory asserts that when $q$ is a sufficiently large prime, the arrangement $\A(\Z/q\Z)$ now is defined over the finite field $\F_q$, and the cardinality of its complement $\#\M(\A; \Z^\ell, \Z/q\Z)$ coincides with $\chi_{\A(\R)}(q)$, the evaluation of the characteristic polynomial $\chi_{\A(\R)}(t)$ of $\A(\R)$ at $q$ (e.g., \cite[Theorem 2.2]{A96}). 
Later on, Kamiya-Takemura-Terao showed that $\#\M(\A; \Z^\ell, \Z/q\Z)$ is actually a quasi-polynomial in $q$  \cite{KTT08}, and left the task of understanding the constituents of this quasi-polynomial to be an interesting problem. 
A number of attempts have been made in order to tackle the problem (e.g., \cite{KTT08}, \cite{LTY17}, \cite{DFM17}), especially, two interpretations for every constituent via subspace and toric viewpoints have been found  \cite{TY18}. 
The mentioning establishments open a new direction for studying the combinatorics and topology of hyperplane and toric arrangements in one single quasi-polynomial.

Much of the motivation for the study of the hyperplane and toric arrangements comes from the arrangements that are defined by irreducible root systems. 
Apart from the theoretical aspects, the ``finite field method" and its toric analogue proved to have efficient applications to compute the characteristic (quasi-)polynomials of several arrangements arising from these vector configurations (e.g., \cite{A96}, \cite{BS98},  \cite{KTT07}, \cite{ACH15}, \cite{Y18L}).  
More concrete computational results have also been derived to assist the observation of interesting coincidences, which we choose to mention some important examples in our study. 
The surprising connection between independent calculations on the Ehrhart quasi-polynomials \cite{Su98} and the characteristic quasi-polynomials \cite{KTT07} produced a main flavor to the analysis on the deformations of root system arrangements in \cite{Y18W}.
Combining the computation on the arithmetic Tutte polynomials of classical root systems \cite{ACH15} with the previously mentioned calculations provided the authors in \cite{LTY17} with the key observation of the identification between the last constituent of the characteristic quasi-polynomial and the corresponding toric arrangement.

Passing from global to local, one may wish to compute the characteristic quasi-polynomials of subsets of a given root system. 
A particularly well-behaved class of the subsets is that of ideals with the associated ideal subarrangements are proved to be free in the sense of Terao \cite{ABCHT16}. 
As a consequence, the characteristic polynomial of every ideal subarrangement factors over the integers with the roots are described combinatorially by the dual partition of the ideal. 
However, some combinatorial explanations for the factorization may have been hidden because of the freeness.
The main goal of this paper is to compute the characteristic quasi-polynomials of the ideals of classical root systems with respect to two different choices of lattices. 
We were inspired and motivated by the ideas and techniques used in \cite{KTT07} that will greatly help us in doing so.
In addition, we wish to provide more combinatorial insights to the understanding of the constituents in connection with the signed graphs.

The remainder of the paper is organized as follows. 
In Section \ref{sec:background}, we recall definitions and basic facts of the characteristic quasi-polynomials, irreducible root systems and their ideals. 
We also recall the constructions of the classical root systems together with the properties of the associated signed graphs.
In Section \ref{sec:ideal}, with a combinatorial ingredient is signed graphs, we compute the characteristic quasi-polynomial of every ideal of a given classical root system with respect to the integer and root lattices. 
As a result, we obtain a full description of the characteristic polynomials of the toric arrangements defined by the ideals.
We will also provide a direct verification to the factorization of the characteristic polynomial of every ideal subarrangement in the classical cases without using the freeness (Theorem \ref{thm:verify}).
\medskip


\section{Preliminaries} 
\label{sec:background}
\subsection{Characteristic quasi-polynomials} 
Let $\Gamma:=\Z^\ell$.
Let $\A$ be a finite list (multiset) of elements in $\Gamma$. 
Let $q\in\Z_{>0}$.
For each $\alpha=(a_1,\ldots,a_\ell) \in \A$, define the subgroup $H_{\alpha, \Z/q\Z}$ of $(\Z/q\Z)^\ell$ by 
$$H_{\alpha, \Z/q\Z}:=\left\{\textbf{z} = (\overline{z_1}, \ldots, \overline{z_\ell})\in  (\Z/q\Z)^\ell  \middle| \sum_{i=1}^\ell a_i \overline{z_i}\equiv\overline{0}\right\}.$$ 
Then the list $\A$ determines the \emph{$q$-reduction} arrangement in $(\Z/q\Z)^\ell$
$$\A(\Bbb \Z/q\Z):=\{H_{\alpha, \Bbb \Z/q\Z} \mid \alpha  \in \A\}.$$
The \emph{complement} of  $\A(\Bbb \Z/q\Z)$ is defined by
$$\M(\A; \Z^\ell,\Z/q\Z):= (\Z/q\Z)^\ell \smallsetminus\bigcup_{\alpha\in\A}H_{\alpha,\Z/q\Z}. $$
For each $\scS\subseteq \A$, write $\Gamma/\langle\scS\rangle\simeq\bigoplus_{i=1}^{n_{\scS}}\Z/d_{\scS, i}\Z\oplus
\Z^{r_{\Gamma}-r_{\scS}}$ where 
$n_{\scS}\geq 0$ and $1<d_{\scS, i}|d_{\scS, i+1}$. 
The \emph{LCM-period} $\rho_{\A}$ of $\A$ is defined by 
\begin{equation*}
\label{eq:LCM-period}
\rho_\A:=\lcm(d_{\scS, n_{\scS}}\mid\scS\subseteq \A). 
\end{equation*} 
It is proved in \cite[Theorem 2.4]{KTT08} that 
$\#\M(\A; \Z^\ell, \Z/q\Z)$ is a monic quasi-polynomial in $q$ for which $\rho_{\A}$ is a period. 
The quasi-polynomial is called the \emph{characteristic quasi-polynomial} of $\A$ (or of $\A(\Z/q\Z)$), and denoted by $\chi^{\quasi}_{\A}(q)$. 
More precisely, there exist monic polynomials 
$f_{\A}^k(t)\in\Z[t]$ ($1 \le k \le \rho_\A$) such that 
for any $q\in\Z_{>0}$ with $q\equiv k\bmod \rho_\A$, 
\begin{equation*}
\chi^{\quasi}_{\A}(q) =f^k_\A(q).
\end{equation*} 
The polynomial $f^k_\A(t)$ is called the \emph{$k$-constituent} of $\chi^{\quasi}_{\A}(q)$. 
It is known that (e.g.,  \cite{A96}, \cite{KTT08}) the $1$-constituent $f^1_\A(t)$ coincides with $\chi_{\A(\R)}(t)$ the characteristic polynomial (e.g., \cite[Definition 2.52]{OT92}) of the real hyperplane arrangement (or $\R$-plexification in the sense of \cite{LTY17}) $\A(\R)=\{H_{\alpha, \Bbb \R} \mid \alpha  \in \A\}$ with 
$H_{\alpha, \R}:=\left\{\textbf{x} \in  \R^\ell  \middle| \sum_{i=1}^\ell a_ix_i=0\right\}$.


\subsection{Root systems and signed graphs}
\label{subsec:Root-systems} 
Our standard reference for root systems is \cite{B68}.
Let $V$ be an $\ell$-dimensional Euclidean space with the standard inner product $(\cdot,\cdot)$. Let $\Phi$ be an irreducible (crystallographic) root system in $V$. 
Fix a positive system $\Phi^+ \subseteq \Phi$ and the associated set of simple roots (base) $\Delta := \{\alpha_1,\ldots,\alpha_\ell \} \subseteq\Phi^+$. 
For $\beta = \sum_{i=1}^\ell n_i\alpha_i \in \Phi^+$, the \emph{height}  of $\beta$ is defined by $ {\rm ht}(\beta) := \sum_{i=1}^\ell n_i $. 

\textit{Notation}: For simplicity of notation, we use the same symbol $M$ for the realization of the matrix $M$ of size $\ell \times m$ as the finite list of elements in $\Gamma=\Z^\ell$ whose elements are the columns of $M$.

For each $\Psi\subseteq\Phi^+$, we assume that an $\ell \times \#\Psi$ integral matrix $S_\Psi = [S_{ij}]$ satisfies
$$\Psi = \left\{\sum_{i=1}^\ell S_{ij}\alpha_i \,\middle|\, 1 \le j \le \#\Psi \right\}.$$
In other words, $S_\Psi$ is the coefficient matrix of $\Psi$ with respect to the base $\Delta$. 
Denote $(\Z/q\Z)^\times:=\Z/q\Z \smallsetminus \{\overline{0}\}$.
We then call $\chi^{\quasi}_{S_\Psi}(\Phi, q)$ the characteristic quasi-polynomial of $\Psi$ with respect to the \emph{root lattice}, and interpret it by the formula 
\begin{equation*}
\chi^{\quasi}_{S_\Psi}(\Phi, q)=\#\{\textbf{z}\in  (\Z/q\Z)^\ell  \mid\textbf{z}\cdot S_\Psi \in ((\Z/q\Z)^\times)^{\#\Psi}\}. 
\end{equation*}

We define $\scH_{\Psi}:= \{H_\alpha \mid \alpha\in\Psi\}$, where $H_\alpha=\{x\in V \mid (\alpha,x)=0\}$ is the hyperplane orthogonal to $\alpha$. 
It is not hard to see that $\scH_{\Psi}$ is the $\R$-plexification of $S_\Psi$ i.e., $\scH_{\Psi}=S_\Psi(\R)$. 
Note also that $\scH_{\Phi^+}$ is called the \emph{Weyl arrangement} of $\Phi^+$, and $\scH_{\Psi}$ is a Weyl subarrangement.

In the remainder of the paper, we are mainly interested in the root system $\Phi$ of classical type ($ABCD$).
Let us recall briefly the constructions of these root systems\footnote{We decided to omit the construction of type $A$ root systems as the calculation on this type follows from those on the other types (e.g., see formula \eqref{eq:type-A}).} following \cite[Chapter VI, $\S$4]{B68}.
Let $\{\epsilon_1, \ldots, \epsilon_{\ell}\}$ be an orthonormal basis for $V$. 
If  $\ell \ge 2$ then
$$\Phi(B_\ell)  = \{\pm\epsilon_i \,(1 \le i  \le \ell),\pm(\epsilon_i \pm \epsilon_j)  \,(1 \le i < j \le \ell )\},$$
with $\#\Phi(B_\ell) =2\ell^2$ is an irreducible root system in $V$ of type $B_{\ell}$. 
We may choose a positive system
$$\Phi^+(B_\ell)  = \{\epsilon_i \,(1 \le i  \le \ell), \epsilon_i \pm \epsilon_j \,(1 \le i < j \le \ell )\}.$$
Define $\alpha_i := \epsilon_i - \epsilon_{i+1}$, for $1 \le i  \le \ell-1$, and $\alpha_{\ell} := \epsilon_{\ell}$. 
Then $\Delta(B_\ell)  = \{\alpha_1,\ldots, \alpha_{\ell}\}$ is the base associated with $\Phi^+(B_\ell) $. 
We may express
\begin{align*}
\Phi^+(B_\ell)  & =  \{ \epsilon_i=\sum_{i\le k \le \ell }\alpha_k \,(1 \le i \le \ell), \epsilon_i-\epsilon_j=\sum_{i\le k < j }\alpha_k \,(1 \le i <  j \le \ell), \\
& \epsilon_i+\epsilon_j=\sum_{i\le k<j }\alpha_k+2 \sum_{j\le k \le \ell }\alpha_k\,(1 \le i < j \le \ell) \}.
\end{align*}
For any $\Psi\subseteq\Phi^+(B_\ell)$, we write $T_\Psi = [T_{ij}]$ for the coefficient matrix of $\Psi$ with respect to the orthonormal basis. 
We then call $\chi^{\quasi}_{T_\Psi}(\Phi, q)$ the characteristic quasi-polynomial of $\Psi$ with respect to the \emph{integer lattice}.
The matrices $T_\Psi$ and $S_\Psi$ are related by $T_\Psi=P(B_\ell)\cdot S_\Psi$, where $P(B_\ell) $ is an unimodular matrix of size $\ell \times \ell$ given by
$$P(B_\ell) =\begin{bmatrix}
1 &   &   & &  \\
-1 & 1 &   &  & \\
  & -1 &   &  & \\
  &   & \ddots  &  & \\
    &   &  & 1 & \\
      &   &  & -1 & 1
\end{bmatrix}.$$
Similarly, let $\ell \ge 2$, an irreducible root system of type $C_{\ell}$ is given by
\begin{align*}
\Phi(C_\ell)  &= \{\pm2\epsilon_i \,(1 \le i  \le \ell),\pm(\epsilon_i \pm \epsilon_j)  \,(1 \le i < j \le \ell )\}, \\
\Phi^+(C_\ell)  &= \{2\epsilon_i \,(1 \le i  \le \ell), \epsilon_i \pm \epsilon_j \,(1 \le i < j \le \ell )\}, \\
\Delta(C_\ell) &=\{\alpha_i = \epsilon_i - \epsilon_{i+1}\,(1 \le i  \le \ell-1),\, \alpha_{\ell} =2\epsilon_{\ell} \} ,\\
\Phi^+(C_\ell)  &= \{ 2\epsilon_i=2\sum_{i\le k< \ell }\alpha_k+\alpha_\ell \,(1 \le i \le \ell), \epsilon_i-\epsilon_j=\sum_{i\le k<j }\alpha_k \,(1 \le i < j \le \ell),  \\
& \epsilon_i+\epsilon_j=\sum_{i\le k<j }\alpha_k+2 \sum_{j\le k <\ell }\alpha_k+\alpha_\ell\,(1 \le i < j \le \ell) \}.
\end{align*}
Finally, let $\ell \ge 3$, an irreducible root system of type $D_{\ell}$ is given by
\begin{align*}
\Phi(D_\ell)  &= \{\pm(\epsilon_i \pm \epsilon_j)  \,(1 \le i < j \le \ell )\}, \\
\Phi^+(D_\ell)  &= \{ \epsilon_i \pm \epsilon_j \,(1 \le i < j \le \ell )\}, \\
\Delta(D_\ell) &=\{\alpha_i = \epsilon_i - \epsilon_{i+1}\,(1 \le i  \le \ell-1),\, \alpha_{\ell} =\epsilon_{\ell-1}+\epsilon_{\ell} \},\\
\Phi^+(D_\ell)  &=\{ \epsilon_i + \epsilon_{\ell}=\sum_{i\le k \le \ell-2 }\alpha_k+\alpha_\ell \,(1 \le i< \ell), \\
& \epsilon_i - \epsilon_{j}=\sum_{i< k < j }\alpha_k \,(1 \le i <  j \le \ell), \\
& \epsilon_i + \epsilon_{j}=\sum_{i\le k<j }\alpha_k+2 \sum_{j\le k <\ell-1 }\alpha_k+\alpha_{\ell-1}+\alpha_\ell\,(1 \le i < j< \ell) \}.
\end{align*}
From the constructions above, we obtain the comparison of the height placements of positive roots in $\Phi(B_\ell)$, $\Phi(C_\ell)$ and $\Phi(D_\ell)$ as in Table \ref{tab:placement}.

 \begin{table}[htbp]
\centering
{\footnotesize\renewcommand\arraystretch{1} 
\begin{tabular}{K{2.5cm}|K{2.5cm}|K{2.5cm}|K{2.5cm}}
Root & Height in $B_\ell$ & Height in $C_\ell$ & Height in $D_\ell$ \\
\hline\hline
$\begin{array}{c}
     \epsilon_i    \\
    (1 \le i \le \ell)
    \end{array}$
& $\ell-i+1$ &None &None \\
\hline
$\begin{array}{c}
     2\epsilon_i    \\
    (1 \le i \le \ell)
    \end{array}$
& None &$2(\ell-i)+1$ &None \\
\hline
$\begin{array}{c}
     \epsilon_i + \epsilon_j\\
    (1 \le i < j \le \ell)
    \end{array}$
& $2\ell-i-j+2$ & $2\ell-i-j+1$ &$2\ell-i-j$ \\
\hline
$\begin{array}{c}
     \epsilon_i - \epsilon_j  \\
    (1 \le i<j \le \ell)
    \end{array}$
& $j-i$ & $j-i$ &$j-i$
\end{tabular}
}
\bigskip
\caption{Height placements in $\Phi(B_\ell)$, $\Phi(C_\ell)$ and $\Phi(D_\ell)$.}
\label{tab:placement}
\end{table}

In the language of \emph{signed graphs} following \cite[\S5]{Z81}, we can associate to each subset $\Psi\subseteq\Phi^+(B_\ell)$ a signed graph $G:=G(\Psi)=(V_G, E_{G^+},E_{G^-},L_G)$ on the vertex set 
$$V_G:=\{v_i,v_j \mid \mbox{$\epsilon_i  \in \Psi$ or $\epsilon_i-\epsilon_{j} \in \Psi$ or $\epsilon_i+\epsilon_{j} \in \Psi$} \},$$ with the set of positive edges $E_{G^+}:=\{e_{ij}^+ \mid \epsilon_i + \epsilon_{j} \in \Psi\}$, the set of negative edges $E_{G^-}:=\{e_{ij}^- \mid \epsilon_i - \epsilon_{j} \in \Psi\}$, and the set of loops $L_G:=\{\ell_i \mid \epsilon_i \in \Psi\}$. 
Alternatively, if $\Psi\subseteq\Phi^+(C_\ell)$, we can define $L_G:=\{\ell_i \mid 2\epsilon_i \in \Psi\}$. 
To extract information from $\Psi$ by using $G(\Psi)$, we associate to it an unordered sequence of nonnegative integers, denoted $\SG(\Psi):=(p_1,\ldots,p_\ell)$ with for each $i$ ($1 \le i \le \ell$)
\begin{equation*}\label{eq:di-signed-graphs}
p_i :=\# \{e_{ij}^+ \mid e_{ij}^+ \in E_{G^+}\}+ \# \{e_{ij}^- \mid e_{ij}^- \in E_{G^-}\} + \# \{\ell_{i} \mid \ell_{i} \in L_{G}\}.
\end{equation*}

\subsection{Ideals}
\label{subsec:ideals} 
Define the partial order $\succeq$ on $\Phi^+$ such that $\beta_1 \succeq \beta_2$ if and only if $\beta_1-\beta_2 = \sum_{i=1}^\ell n_i\alpha_i$ with all $n_i \in \Z_{\ge 0}$. 
A subset $I$ of  $ \Phi^+$ is called an \emph{ideal} if, for $\beta_1,\beta_2 \in \Phi^+$, $\beta_1 \succeq \beta_2, \beta_ 1 \in I$ then $\beta_2 \in I$. 
Let us recall the recent advance towards the study of the ideals. 
Let $\Theta^{(k)} \subseteq \Phi^+$ be the set consisting of positive roots of height $k$. 
Let $I$ be an ideal of  $ \Phi^+$ and set $M:=\max\{{\rm ht}(\beta)\mid \beta \in I\}$. 
The \textit{height distribution} of $I$ is defined as a sequence of positive integers:
$$(i_1, \ldots , i_k, \ldots , i_{M}),$$ where 
$i_k := \#\Theta^{(k)}$ for $1 \le k \le M$.
The  \textit{dual partition} $\DP(I)$ of (the height distribution of) $I$ is given by a sequence of nonnegative integers:
$$\DP(I) := \left( (0)^{\ell-i_1},(1)^{i_1-i_2},\ldots ,(M-1)^{i_{M-1}-i_{M}},(M)^{i_{M}}\right),$$ 
where notation $(a)^b$ means the integer $a$ appears exactly $b$ times.  
Although the definition of the dual partition seems to esteem the (increasing) order of components in the sequence, this requirement is not important in this paper. 
Two dual partitions of an ideal are conventionally identical if the partitions differ only by a re-ordering of the components.
\begin{theorem}[\cite{ABCHT16}]\label{thm:dual}
 Any ideal subarrangement $\scH_I$ is free (in the sense of Terao) with the set of exponents coincides with $\DP(I)$.
\end{theorem}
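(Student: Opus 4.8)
The plan is to deduce the theorem from the \emph{Multiple Addition Theorem} (MAT), a multiple‑hyperplane generalization of Terao's classical Addition Theorem (see \cite{OT92}). In the form I would use, the MAT reads: if $\scB$ is a free arrangement in $\R^\ell$ with exponents $e_1\le\cdots\le e_\ell$ whose top $p$ exponents are all equal, say $e_{\ell-p+1}=\cdots=e_\ell=d$, and $H_1,\ldots,H_p\notin\scB$ are hyperplanes such that
\begin{enumerate}[(i)]
\item $\operatorname{codim}\bigl(H_1\cap\cdots\cap H_p\bigr)=p$;
\item $H_1\cap\cdots\cap H_p\not\subseteq H$ for every $H\in\scB$; and
\item $\bigl(\scB\cup\{H_1,\ldots,H_{j-1}\}\bigr)^{H_j}$ consists of exactly $|\scB|-d+(j-1)$ hyperplanes, for $1\le j\le p$;
\end{enumerate}
then $\scB\cup\{H_1,\ldots,H_p\}$ is free with exponents $(e_1,\ldots,e_{\ell-p},d+1,\ldots,d+1)$. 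I would first establish the MAT by iterating the Addition(‑Deletion) Theorem $p$ times; the one delicate point is that at each stage the restriction appearing in (iii) has to be recognized as free with the prescribed exponents, and I would arrange this by a secondary induction on $\ell$ --- in the classical cases each such restriction is, after relabeling the $\epsilon$‑coordinates, again an ideal subarrangement of a classical root system of rank $<\ell$, which is exactly where the signed‑graph description does its work.

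Granting the MAT, the theorem follows by induction on $|I|$. If $I=\emptyset$ then $\scH_I$ is the empty arrangement, free with exponents $(0,\ldots,0)=\DP(\emptyset)$. For the inductive step put $M:=\max\{{\rm ht}(\beta)\mid\beta\in I\}$, let $\Theta:=\{\beta\in I\mid{\rm ht}(\beta)=M\}$ and $I':=I\smallsetminus\Theta$. Then $I'$ is again an ideal --- if $\gamma\preceq\beta\in I'$ then ${\rm ht}(\gamma)\le M-1$ and $\gamma\in I$, so $\gamma\in I'$ --- hence by the inductive hypothesis $\scH_{I'}$ is free with $\exp(\scH_{I'})=\DP(I')$. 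Writing $i_k$ for the number of height‑$k$ roots lying in $I$, the largest exponent of $\DP(I')$ is $M-1$, with multiplicity $i_{M-1}$ (with multiplicity $\ell$ when $I'=\emptyset$). Since the height distribution of an ideal is weakly decreasing --- a known property of root posets, implicit already in the definition of $\DP$ (see \cite{ABCHT16}) --- we have $i_{M-1}\ge i_M$; set $p:=i_M$. Thus the top $p$ exponents of $\scH_{I'}$ all equal $M-1$, and I apply the MAT with $\scB=\scH_{I'}$ and the $p$ hyperplanes $\{H_\beta\mid\beta\in\Theta\}$. The conclusion is that $\scH_I=\scH_{I'}\cup\{H_\beta\mid\beta\in\Theta\}$ is free with exponents obtained from $\DP(I')$ by raising $p=i_M$ copies of $M-1$ to $M$, namely
\[
\bigl((0)^{\ell-i_1},(1)^{i_1-i_2},\ldots,(M-1)^{i_{M-1}-i_M},(M)^{i_M}\bigr),
\]
which is precisely $\DP(I)$.

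It remains to verify hypotheses (i)--(iii) of the MAT for $\scB=\scH_{I'}$ and the hyperplanes coming from $\Theta$, and this is where the root‑system combinatorics enters. Hypothesis (i) is the linear independence of the roots in $\Theta$; I would prove the stronger statement that roots of a fixed height in an irreducible root system are linearly independent, which is elementary --- a dependence would force two distinct same‑height roots to have positive inner product, hence their difference to be a nonzero root of height $0$, a contradiction. Hypothesis (ii) is a positivity statement --- no root of $I'$ is a linear combination of the height‑$M$ roots of $I$ --- which I would settle using the height functional together with a norm estimate on roots and the order $\succeq$. The genuine obstacle is hypothesis (iii): for each $\beta\in\Theta$ one must count the hyperplanes surviving in the successive restrictions to $H_\beta$ and show this count equals $|\scH_{I'}|-(M-1)$ up to the shift $(j-1)$. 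I would organize this count through the signed graph $G(I)$ and its invariant $\SG(I)=(p_1,\ldots,p_\ell)$: restricting to $H_\beta$ makes two roots $\gamma,\gamma'$ of $I'$ parallel precisely when a short incidence relation holds among the vertices of $\beta$, $\gamma$, and $\gamma'$, so the number of such collapses --- and hence the drop in cardinality --- is read off from the edges and loops at the two endpoints of $\beta$. Carrying this through uniformly for types $B$, $C$, $D$, and then deducing type $A$ from these, is the technical heart of the argument.
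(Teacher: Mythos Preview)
The paper does not prove this theorem; it is quoted from \cite{ABCHT16} as a known result (only Corollary~\ref{cor:ideal-factorization} is later re-verified combinatorially for the classical types, in Theorem~\ref{thm:verify}). Your overall plan --- peel off the top height layer $\Theta$ and induct via a Multiple Addition Theorem --- is precisely the strategy of \cite{ABCHT16}. The gap lies in how you propose to prove the MAT itself. You want to iterate the ordinary Addition--Deletion Theorem, which at each step requires the restriction to be \emph{free} with the prescribed exponents, and you claim this holds because ``in the classical cases each such restriction is, after relabeling the $\epsilon$-coordinates, again an ideal subarrangement of a classical root system of rank $<\ell$.'' That claim is false. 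In $\Phi(D_4)$ take $I'=\{\alpha\in\Phi^+:\mathrm{ht}(\alpha)\le 2\}$ and restrict $\scH_{I'}$ to $H_\beta$ with $\beta=\epsilon_2+\epsilon_3$ (a height-$3$ root); in coordinates $(y_1,y_2,y_3)=(x_1,x_3,x_4)$ on $H_\beta$ the restriction is $\{y_1\pm y_2=0,\ y_2\pm y_3=0,\ y_2=0\}$. The lone loop forces $y_2$ to play the role of $\epsilon_3$ in any candidate $B_3$-labeling, and then the set contains $\epsilon_1-\epsilon_3$ while omitting the simple root $\epsilon_1-\epsilon_2$, so it is not an ideal (and the loop rules out $D_3$ altogether). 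Adding the other height-$3$ hyperplanes before restricting does not repair this. Thus your secondary induction on $\ell$ cannot get off the ground, and a fortiori your scheme says nothing about the exceptional types, which the theorem also covers.

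The argument in \cite{ABCHT16} sidesteps this obstacle: their MAT is an abstract statement whose hypothesis~(3) is purely \emph{numerical}, namely $|\scB|-|(\scB\cup\{H_j\})^{H_j}|=e_\ell$ for each $j$ individually (not cumulatively as in your (iii)), and its proof uses Ziegler's multirestriction together with a local--global freeness criterion for multiarrangements, never requiring the simple restrictions themselves to be free. Once the MAT is available, verifying its three hypotheses for root systems is a short uniform argument via reflections, with no need for the signed-graph case analysis you outline.
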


\begin{corollary}[\cite{ABCHT16}]\label{cor:ideal-factorization}   
 For any ideal $I\subseteq \Phi^+$, the characteristic polynomial $\chi_{\scH_I}(\Phi, t)$ factors as follows:
\begin{equation*}\label{eq:deal-factorization}
\chi_{\scH_I}(\Phi,t)= \prod_{i=1}^\ell (t-d_i),
\end{equation*}
where $\DP(I)=(d_1,\ldots,d_\ell)$.
\end{corollary}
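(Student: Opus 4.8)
The plan is to obtain Corollary \ref{cor:ideal-factorization} as an immediate consequence of the freeness statement in Theorem \ref{thm:dual} together with Terao's Factorization Theorem. Recall that Terao's theorem asserts: if a central hyperplane arrangement $\A$ in an $\ell$-dimensional space is free with exponents $\exp(\A)=(e_1,\ldots,e_\ell)$, then its characteristic polynomial factors completely as $\chi_{\A}(t)=\prod_{i=1}^{\ell}(t-e_i)$. Since each Weyl arrangement, and hence each ideal subarrangement $\scH_I$, is central, the whole argument reduces to matching $\scH_I$ against the hypotheses of that theorem.

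The steps, in order, would be: \textbf{(i)} observe that the sequence $\DP(I)$ indeed has exactly $\ell$ entries, since the telescoping sum $(\ell-i_1)+(i_1-i_2)+\cdots+(i_{M-1}-i_M)+i_M$ equals $\ell$, which is both the length required for a valid exponent multiset and the rank of the ambient space $V$ (note also $i_1\le\ell$, so the block $(0)^{\ell-i_1}$ is meaningful); \textbf{(ii)} invoke Theorem \ref{thm:dual} to conclude that $\scH_I$ is a free arrangement with $\exp(\scH_I)=\DP(I)=(d_1,\ldots,d_\ell)$ as multisets; \textbf{(iii)} apply Terao's Factorization Theorem with $\A=\scH_I$ and $(e_1,\ldots,e_\ell)=(d_1,\ldots,d_\ell)$ to read off $\chi_{\scH_I}(\Phi,t)=\prod_{i=1}^{\ell}(t-d_i)$.

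The only point needing care — a bookkeeping matter rather than a genuine obstacle — is to confirm that the polynomial written $\chi_{\scH_I}(\Phi,t)$ here is the same invariant to which Terao's theorem applies. This is the characteristic polynomial of the real hyperplane arrangement $\scH_I=S_I(\R)$ in the sense of \cite[Definition 2.52]{OT92} (equivalently, by the discussion in Section \ref{sec:background}, the $1$-constituent $f^1_{S_I}(t)$ of the characteristic quasi-polynomial), so the identification is precisely the one used when freeness of ideal subarrangements is established in \cite{ABCHT16}. With that understood there is no real difficulty: all the substantive content sits in Theorem \ref{thm:dual}, and the corollary is a one-line deduction. I would also flag here that a genuinely different, combinatorial derivation of the same factorization in the classical cases — one that does not pass through freeness — is pursued separately as Theorem \ref{thm:verify} later in the paper.
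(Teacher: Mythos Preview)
Your proposal is correct and follows exactly the route the paper intends: the corollary is stated immediately after Theorem \ref{thm:dual} as a direct consequence of freeness together with Terao's Factorization Theorem, and no separate argument is given in the paper. Your remarks about the length of $\DP(I)$ and the identification of $\chi_{\scH_I}(\Phi,t)$ are accurate bookkeeping, and your pointer to Theorem \ref{thm:verify} for the freeness-free combinatorial verification also matches the paper's narrative.
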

 
When $\Phi$ is of type $B_\ell$ or $C_\ell$, $\DP(I)=\SG(I)$, while $\DP(I)\ne\SG(I)$ if $\Phi$ is of type $D_\ell$. 
Also, $\SG(I)$ does not determine $I$, for instance, $I_1=\{\epsilon_4 - \epsilon_5\}$ and $I_2=\{\epsilon_4 + \epsilon_5\}$ are distinct ideals of  $\Phi^+(D_5)$, but $\SG(I_1)=\SG(I_2)=(0,0,0,1,0)$. 

\section{Computation on ideals} 
\label{sec:ideal} 
In the remainder of the paper, we assume that $\Phi$ is of classical type.
We summarize some easy cases that the computation of the characteristic quasi-polynomials is manageable thanks to Corollary \ref{cor:ideal-factorization}.
The minimum period coincides with the LCM-period \cite[Remark 3.3]{KTT10}. 
So the minimum period of $\chi^{\quasi}_{S_I}(A_\ell, q)$ is $1$ for every $I\subseteq\Phi^+$; hence $\chi^{\quasi}_{S_I}(A_\ell, q)=\chi_{\scH_I}(A_\ell,q)$.
For other cases, the minimum period of $\chi^{\quasi}_{S_I}(\Phi,q)$ is at most $2$; hence  we know the $1$-constituents: $f^1_{S_I}(\Phi,t)=\chi_{\scH_I}(\Phi,t)$. 
We are left with the task of determining  $f^2_{S_I}(\Phi, t)$, or equivalently, $\chi^{\quasi}_{S_I}(\Phi, q)$ when $q$ is even, and $\Phi$ is of type $B$, $C$ or $D$. 
Turning the problem around, we would like to verify Corollary \ref{cor:ideal-factorization} by using the information of ideals via signed graphs without relying on the freeness, which we will do in Theorem \ref{thm:verify}. 
 
\subsection{Type $B$ root systems}
By \cite[Theorem 4.1]{KTT07}\footnote{Note that the number $b(q)$ in \cite[Theorem 4.1]{KTT07} should be read as $b(q)=\prod_{i=1}^m\gcd\{q,d_i\}$, where $d_i$'s are invariant factors of the matrix $P$. 
In particular, if $\det(P)$ takes the value in $\{1,2\}$, then $b(q)=\gcd\{q,\det(P)\}$. 
Hence \cite[Theorem 4.1]{KTT07} is valid if $\Phi$ is a classical root system.}, if $\Psi\subseteq\Phi^+(B_\ell)$, 
\begin{equation*}\label{eq:B-TS}
\chi^{\quasi}_{S_\Psi}(B_\ell, q)=\chi^{\quasi}_{T_\Psi}(B_\ell, q).
\end{equation*}

Let $I$ be an ideal of  $\Phi^+(B_\ell)$. 
Assume that $\DP(I)=\SG(I)=(d_1,\ldots,d_\ell)$. For each $k$ $(1 \le k \le \ell)$, write $d_k=d_k^{(+)}+d_k^{(-)}+d_k^{(0)}$ for a partition of $d_k$ with
\begin{equation*}
\label{eq:partition-dk}
\begin{aligned}
d_k^{(0)} & := 
\begin{cases}
0 \quad\mbox{ if $\epsilon_k  \notin I$}, \\ 
1 \quad\mbox{ if $\epsilon_k   \in I$}.
\end{cases}
\\
d_k^{(\pm)} & :=\#\{\epsilon_k \pm\epsilon_{j} \mid \epsilon_k \pm\epsilon_{j} \in I \}.
\end{aligned}
\end{equation*}
The partitions give a partition of $I$ which we call it the \emph{$B$-partition}, as follows: $I =I^{0} \sqcup I^{-} \sqcup I^{+},$ where
\begin{equation*}
\label{eq:B-partition}
\begin{aligned}
I^{0} & := \{\epsilon_i \mid \epsilon_i   \in I\}\\
I^{\pm} & := \{\epsilon_i\pm\epsilon_j \mid \epsilon_i \pm\epsilon_{j} \in I \}.
\end{aligned}
\end{equation*}

If $\epsilon_i + \epsilon_{j} \notin I$ for all $i, j$ (type $A$), then for all $q \in \Z_{>0}$, 
\begin{equation}
\label{eq:type-A}
\chi^{\quasi}_{T_I}(B_\ell, q) = \prod_{i=1}^\ell (q-d_i).
\end{equation}
Now assume that some $\epsilon_i + \epsilon_{j} \in I$ with $1 \le i < j \le \ell$. 
In particular, $\epsilon_k \in I$ for all $i \le k \le \ell$. 
Set $s:=\min \{ 1 \le k \le \ell \mid \epsilon_k \in I\}$. 
Denote $R:=I \smallsetminus \{\epsilon_i - \epsilon_{j} \in I \mid 1\le i<s, i <j \le \ell\}$. 
Thus $R$ is an ideal of the root subsystem of $\Phi(B_\ell)$ of  type $B_{\ell-s+1}$ with a base given by $\Delta(B_{\ell-s+1})=\{\alpha_s,\ldots,\alpha_\ell\}$. 
Furthermore, for all $q \in \Z_{>0}$, we have
\begin{equation*}\label{eq:inductive-B}
\chi^{\quasi}_{T_I}(B_\ell, q) =\chi^{\quasi}_{T_R}(B_{\ell-s+1}, q)\cdot \prod_{i=1}^{s-1}(q-d_i).
\end{equation*}
Then it suffices to consider $s=1$ i.e., $\epsilon_1 \in I$. 
For such ideals, $d_k^{(-)} = \ell-k$, $d_k^{(0)}=1$ for $1 \le k \le \ell$.

 \begin{lemma}
 \label{lem:B-D}
 Let $I$ be an ideal of $\Phi^+(B_\ell)$ with $\epsilon_1 \in I$. 
 Set $J:=I \smallsetminus I^{0}$. 
 \begin{enumerate}[(a)]
\item  $J$ is an ideal of $\Phi^+(D_\ell)$.
\item  $\DP(J)=(p_1,\ldots,p_\ell)$ with $p_k=d_k^{(-)}+d_{k-1}^{(+)}$ for all $1 \le k \le \ell$. 
Here we agree that $d_{0}^{(2)} \equiv 0$.
\end{enumerate}
\end{lemma}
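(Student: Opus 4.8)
The plan is to verify both assertions by a direct combinatorial analysis of the root data, using the explicit height formulas recorded in Table \ref{tab:placement} and the characterizations of ideals in terms of the partial order $\succeq$. For part (a), I would first observe that $\Phi^+(D_\ell)$ is obtained from $\Phi^+(B_\ell)$ precisely by deleting the short roots $\epsilon_i$, and that the sets $J = I\setminus I^0 = I^- \sqcup I^+$ consist only of roots of the form $\epsilon_i\pm\epsilon_j$, hence $J\subseteq\Phi^+(D_\ell)$. To see that $J$ is a $D_\ell$-ideal, I would take $\beta_1\in J$ and $\beta_2\in\Phi^+(D_\ell)$ with $\beta_1\succeq_{D}\beta_2$, and show $\beta_2\in J$. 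The key point is that the partial order on $\Phi^+(D_\ell)$ compares two roots $\epsilon_i\pm\epsilon_j$ by the componentwise order on their $\epsilon$-coefficient vectors, and the same is true inside $\Phi^+(B_\ell)$ restricted to such roots; so $\beta_1\succeq_D\beta_2$ forces $\beta_1\succeq_B\beta_2$, whence $\beta_2\in I$, and since $\beta_2$ is not of the form $\epsilon_k$ it lies in $J$. (One must be slightly careful that the two bases $\Delta(B_\ell)$ and $\Delta(D_\ell)$ agree in coordinates $\alpha_1,\dots,\alpha_{\ell-1}$ and differ only in $\alpha_\ell$; the cleanest way to sidestep this is to express the order directly via $\epsilon$-coordinates using the height formulas, rather than via simple-root coefficients.)

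For part (b), I would compute $\DP(J)$ from its height distribution $(j_1,\dots,j_{M'})$, where $j_k = \#\{\beta\in J \mid \mathrm{ht}_D(\beta)=k\}$ and heights are taken in $\Phi^+(D_\ell)$. The dual partition $\DP(J)=(p_1,\dots,p_\ell)$ then has $p_k$ equal to the number of columns of height $\ge k$ in the Young-diagram transpose, i.e. $p_k$ counts how many of the values $j_1,j_2,\dots$ are $\ge k$ — equivalently, $p_k$ is the number of indices where the "staircase" of $J$ has a cell. The strategy is to identify $p_k$ intrinsically: I claim $p_k$ equals the number of roots $\beta\in J$ lying in the $k$-th "row" when $J$ is organized by its first $\epsilon$-coordinate. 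Concretely, for a $D_\ell$-ideal the roots $\epsilon_k - \epsilon_j\in J$ (for varying $j>k$) contribute $d_k^{(-)}$ and the roots $\epsilon_k+\epsilon_j\in J$ contribute $d_k^{(+)}$; the combinatorial content of the claim is that, after passing to the dual partition, the $\epsilon_k+\epsilon_j$ roots get "shifted up by one index," producing the stated formula $p_k = d_k^{(-)} + d_{k-1}^{(+)}$. I would make this precise by sorting $J$ into the chain structure dictated by $\succeq_D$ and counting.

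The main obstacle — and the step I would spend the most care on — is pinning down the exact bookkeeping in part (b): translating between the height distribution of $J$, its dual partition, and the "row counts" $d_k^{(\pm)}$. The source of the index shift is visible already in Table \ref{tab:placement}: in type $D_\ell$ the root $\epsilon_i+\epsilon_j$ has height $2\ell-i-j$ while $\epsilon_i-\epsilon_j$ has height $j-i$, so a root $\epsilon_k+\epsilon_j$ behaves, heightwise, like a $(-)$-root with a smaller first index. A convenient device is to reindex the positive roots of $D_\ell$ by pairs and recognize that the ideal $J$, together with the constraint $\epsilon_1\in I$ (so that $d_k^{(-)}=\ell-k$ and the $(-)$-part of $J$ is "full"), has a staircase shape whose transpose is read off column by column. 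I would verify the formula first on the extreme cases (when $I^+=\varnothing$, recovering $p_k=\ell-k$ which matches $\DP$ of the full $D_\ell$ positive system, and when $I$ is all of $\Phi^+(B_\ell)$), and then argue in general by adding the roots $\epsilon_i+\epsilon_j$ one antichain at a time and tracking how each addition increments exactly one $p_k$. The convention $d_0^{(+)}\equiv 0$ (written $d_0^{(2)}$ in the statement) handles the boundary term $k=1$, where $p_1 = d_1^{(-)} = \ell-1$ as it must be.
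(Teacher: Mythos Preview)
Your approach is essentially the same as the paper's, which dispatches (a) as ``straightforward by the definition of ideals'' and (b) as ``follows from the height placements in Table \ref{tab:placement}''; you are simply filling in the details the paper omits.

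One correction to your sketch of (a): the claim that $\succeq_D$ (or $\succeq_B$) coincides with the componentwise order on $\epsilon$-coefficient vectors is false in both directions. For instance, in $D_4$ one has $\epsilon_1+\epsilon_4 \ge \epsilon_1-\epsilon_4$ componentwise, yet $\epsilon_1+\epsilon_4 \not\succeq_D \epsilon_1-\epsilon_4$ since $2\epsilon_4 = \alpha_4^D - \alpha_3^D$; conversely $\alpha_1^D = \epsilon_1-\epsilon_2$ already shows $\succeq_D$ does not force componentwise dominance. The clean and correct argument is the one you almost state: write each $D_\ell$-simple root as a nonnegative combination of $B_\ell$-simple roots (namely $\alpha_i^D=\alpha_i^B$ for $i<\ell$ and $\alpha_\ell^D=\alpha_{\ell-1}^B+2\alpha_\ell^B$), which immediately gives $\beta_1\succeq_D\beta_2 \Rightarrow \beta_1\succeq_B\beta_2$ and hence $\beta_2\in I\cap\Phi^+(D_\ell)=J$. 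With that fix, your plan for (b) via the column decomposition of $J$ (column $k$ containing the $\ell-k$ roots $\epsilon_k-\epsilon_j$ at heights $1,\dots,\ell-k$, topped by the $d_{k-1}^{(+)}$ roots $\epsilon_{k-1}+\epsilon_j\in I$ at heights $\ell-k+1,\dots,\ell-k+d_{k-1}^{(+)}$) goes through directly, and the transpose of this staircase is exactly $\DP(J)$.
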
 
\begin{proof}
The proof of (a) is straightforward by the definition of ideals. 
The proof of (b) follows from the height placements in Table \ref{tab:placement}.
 \end{proof}

\begin{theorem} \label{thm:B-and-D} 
Under the Lemma \ref{lem:B-D}'s assumptions, if $q \in \Z_{>0}$ is even, 
  \begin{equation*}\label{eq:B-D}
\chi^{\quasi}_{T_{I}}(B_\ell, q)=\chi^{\quasi}_{T_J}(D_\ell, q-1)= \prod_{i=1}^\ell (q-p_i-1).
\end{equation*}
\end{theorem}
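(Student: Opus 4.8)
The plan is to establish the two equalities in the statement separately, with the bulk of the work going into the first one.

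\medskip

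\textbf{Step 1: The second equality.} Once the first equality is in hand, the identity $\chi^{\quasi}_{T_J}(D_\ell, q-1)=\prod_{i=1}^\ell(q-p_i-1)$ is essentially a direct consequence of Corollary \ref{cor:ideal-factorization} applied to the ideal $J\subseteq\Phi^+(D_\ell)$. Indeed, by Lemma \ref{lem:B-D}(a) $J$ is an ideal of $\Phi^+(D_\ell)$, and by part (b) its dual partition is $\DP(J)=(p_1,\ldots,p_\ell)$; hence $\chi_{\scH_J}(D_\ell,t)=\prod_{i=1}^\ell(t-p_i)$. Since the minimum period of $\chi^{\quasi}_{S_J}(D_\ell,q)$ is at most $2$ and, for type $D$, the relation between $S_J$ and $T_J$ is given by the unimodular change of base (so the characteristic quasi-polynomials with respect to the two lattices agree — this needs the analogue of the type-$B$ remark, $\det$ of the transition matrix lying in $\{1,2\}$, which holds for $D_\ell$ too), we get that $\chi^{\quasi}_{T_J}(D_\ell,q)$ has period dividing $2$. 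If one can further argue its minimum period is $1$ (equivalently, the $2$-constituent coincides with the $1$-constituent), substituting $q-1$ for $q$ (valid for even $q$, so $q-1$ is odd hence $\equiv 1 \bmod 2$, landing in the $1$-constituent) gives $\prod_{i=1}^\ell((q-1)-p_i)$ as desired.

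\medskip

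\textbf{Step 2: The first equality, the main obstacle.} This is the heart of the matter: for even $q$ we must show $\chi^{\quasi}_{T_I}(B_\ell,q)=\chi^{\quasi}_{T_J}(D_\ell,q-1)$, where $J=I\smallsetminus I^0$ is $I$ with its "short roots" $\epsilon_i$ removed. The plan is to use the counting interpretation: $\chi^{\quasi}_{T_I}(B_\ell,q)$ counts the vectors $\mathbf z\in(\Z/q\Z)^\ell$ with $\mathbf z\cdot T_I\in((\Z/q\Z)^\times)^{\#I}$, i.e. for every root $\alpha\in I$ the corresponding linear form evaluated at $\mathbf z$ is a unit. Since $\epsilon_1\in I$ and $I$ is an ideal, all $\epsilon_k\in I$ for $1\le k\le\ell$ (this is why $d_k^{(0)}=1$ for all $k$), so the conditions coming from $I^0$ say precisely that every coordinate $z_k$ is a unit in $\Z/q\Z$. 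The remaining conditions, coming from $J=I^-\sqcup I^+$, say $z_i-z_j$ and $z_i+z_j$ are units for the appropriate pairs. Thus
\begin{equation*}
\chi^{\quasi}_{T_I}(B_\ell,q)=\#\{\mathbf z\in((\Z/q\Z)^\times)^\ell \mid z_i\pm z_j\in(\Z/q\Z)^\times \text{ for all } \epsilon_i\pm\epsilon_j\in I\}.
\end{equation*}
The strategy is then to build a bijection (or a bijective counting argument) between this set for modulus $q$ even and the set $\{\mathbf z\in(\Z/(q-1)\Z)^\ell\mid z_i\pm z_j\in(\Z/(q-1)\Z)^\times \text{ for the same pairs}\}$ counted by $\chi^{\quasi}_{T_J}(D_\ell,q-1)$. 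The natural mechanism, following the techniques of \cite{KTT07}, is that when $q$ is even the units of $\Z/q\Z$ are exactly the odd residues, and one can substitute $z_k=2y_k+1$ (or an affine rescaling) to turn the "all $z_k$ odd" condition into an unconstrained vector $\mathbf y$, under which $z_i-z_j=2(y_i-y_j)$ becomes a unit iff $y_i-y_j$ is a unit mod $q/2$ while $z_i+z_j=2(y_i+y_j)+2=2(y_i+y_j+1)$ becomes a unit iff $y_i+y_j+1$ is a unit — and then a further affine shift in one block of variables absorbs the $+1$, converting the $+$ and $-$ conditions into the homogeneous type-$D$ conditions modulo $q/2$, not $q-1$.

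\medskip

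\textbf{Reconciling the moduli.} The apparent discrepancy ($q/2$ versus $q-1$) must be resolved by the fact that only the $2$-constituent is at stake: both $\chi^{\quasi}_{T_I}(B_\ell,q)$ for even $q$ and $\chi^{\quasi}_{T_J}(D_\ell,q-1)$ for odd $q-1$ are polynomial expressions in $q$, so it suffices to verify the identity of polynomials, which can be done by checking it at sufficiently many values of $q$ (or, cleanly, by showing both sides equal $\prod_{i=1}^\ell(q-p_i-1)$, reducing Step 2 to: the even-$q$ constituent of $\chi^{\quasi}_{T_I}(B_\ell,q)$ equals $\prod_{i=1}^\ell(q-p_i-1)$). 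Concretely I would: (i) reduce to $s=1$ as the excerpt already does; (ii) use the $B$-partition to write the counting condition as above; (iii) substitute $z_k\mapsto 2z_k+1$ to exploit that units mod even $q$ are the odds, reducing to counting certain $\mathbf z\in(\Z/(q/2)\Z)^\ell$; (iv) observe the resulting conditions are exactly the type-$D$ conditions defining an ideal whose dual partition, via Table \ref{tab:placement} and Lemma \ref{lem:B-D}(b), is $(p_1,\ldots,p_\ell)$, so that Corollary \ref{cor:ideal-factorization} evaluates the count — after accounting for the number of preimages and the modulus bookkeeping — as $\prod_{i=1}^\ell(q-p_i-1)$. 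The main obstacle is precisely the careful combinatorial/number-theoretic bookkeeping in steps (iii)–(iv): tracking how the unit conditions transform under the affine substitution, confirming that the "$+1$" shifts in the $\epsilon_i+\epsilon_j$ conditions can be uniformly absorbed (this is where the structure of an $ideal$, not an arbitrary subset, is used), and verifying that the final polynomial genuinely matches $\prod(q-p_i-1)$ rather than some $q/2$-periodic artifact — i.e. that the naive "$q/2$" collapses to the claimed "$q-1$" shift once everything is written as a polynomial in $q$.
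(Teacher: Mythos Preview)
Your Step 1 is essentially right in spirit, though muddled: for type $D_\ell$ the transition matrix between the root lattice and the integer lattice has determinant $\pm 2$, not $\pm 1$, so $\chi^{\quasi}_{T_J}$ and $\chi^{\quasi}_{S_J}$ do \emph{not} agree in general. But you don't need that. The LCM-period of $T_J$ divides $2$, so for odd $q-1$ one lands in the $1$-constituent, which equals $\chi_{\scH_J}(q-1)$ regardless of lattice; then Lemma~\ref{lem:B-D}(b) and Corollary~\ref{cor:ideal-factorization} give $\prod_i((q-1)-p_i)$. This is exactly how the paper handles the second equality.

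The real problem is Step 2. You have misread the paper's notation: $(\Z/q\Z)^\times$ is defined in \S2 as $\Z/q\Z\smallsetminus\{\overline 0\}$, the \emph{nonzero} residues, not the group of units. So the condition coming from $\epsilon_k\in I$ is just $\overline{z_k}\ne\overline 0$; there are $q-1$ such residues, not $q/2$. Your substitution $z_k=2y_k+1$ (``units are the odd residues'') is therefore not a bijection onto the admissible set, and everything downstream---the modulus $q/2$, the attempt to ``reconcile'' $q/2$ with $q-1$ by a polynomial argument---is built on this misreading. The reconciliation paragraph is also circular: saying ``both sides are polynomials, so it suffices to check equality'' does not help when the task is precisely to compute one of those polynomials.

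The paper's argument for the first equality is a direct bijection with no arithmetic of units involved. One represents each $\overline{z_i}\ne\overline 0$ by an integer $z_i\in\{1,\dots,q-1\}$, then shifts $v_i:=z_i-q/2$ so that $v_i\in\{-(q-2)/2,\dots,(q-2)/2\}$ and the conditions become $v_i\ne v_j$ and $v_i+v_j\ne 0$. A second, piecewise shift $t_i:=v_i$ if $v_i\ge 0$ and $t_i:=v_i+(q-1)$ if $v_i<0$ bijects this range onto $\{0,\dots,q-2\}$ and transforms the conditions into $t_i\ne t_j$ and $t_i+t_j\ne q-1$, i.e.\ exactly the type-$D$ complement count modulo $q-1$. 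That is where the ``$q-1$'' comes from: the $q-1$ nonzero residues mod $q$ are matched with all $q-1$ residues mod $q-1$, not with anything mod $q/2$.
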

\begin{proof} 
The proof of the first equality is similar to (but more general than) that of \cite[Lemma 4.4(11)]{KTT07}. 
\begin{align*}
\chi^{\quasi}_{T_I}(B_\ell, q) 
&=\# \left\{ \textbf{z}  \in   (\Z/q\Z)^\ell   \middle|
\begin{array}{c}
      \overline{z_i} \ne  \overline{z_j}\,(1 \le i <j \le \ell),  \\
      \overline{z_i} +  \overline{z_j}\ne \overline0\,(\epsilon_i+\epsilon_{j} \in I), \\
      \overline{z_i}\ne \overline0\,(1 \le i \le \ell)
    \end{array}
\right\} \\
&= \#\left\{(z_1,\ldots,z_\ell)  \in \Z^\ell  \middle|
\begin{array}{c}
     z_i \ne z_j \,(1 \le i <j \le \ell),  \\
      z_i +z_j \ne q\,(\epsilon_i+\epsilon_{j} \in I), \\
     1 \le z_i \le q-1\,(1 \le i \le \ell)
    \end{array}
\right\} \\
&= \#\left\{(v_1,\ldots,v_\ell)  \in \Z^\ell  \middle|
\begin{array}{c}
     v_i \ne v_j \,(1 \le i <j \le \ell),  \\ 
     v_i +v_j \ne 0\,(\epsilon_i+\epsilon_{j} \in I), \\
     -\frac{q-2}2 \le v_i \le \frac{q-2}2\,(1 \le i \le \ell)
    \end{array}
\right\} \\
&= \#\left\{(t_1,\ldots,t_\ell)  \in \Z^\ell  \middle|
\begin{array}{c}
     t_i \ne t_j\,(1 \le i <j \le \ell),  \\ 
t_i +t_j \ne q-1\,(\epsilon_i+\epsilon_{j} \in I), \\
     0 \le t_i \le q-2\,(1 \le i \le \ell)
    \end{array}
\right\} \\
&=\# \left\{ \textbf{u}  \in   (\Z/(q-1)\Z)^\ell   \middle|
\begin{array}{c}
      \overline{u_i} \ne  \overline{u_j}\,(1 \le i <j \le \ell),  \\
      \overline{u_i} +  \overline{u_j}\ne \overline0\,(\epsilon_i+\epsilon_{j} \in J)
          \end{array}
\right\}\\
&= \chi^{\quasi}_{T_J}(D_\ell, q-1).
\end{align*} 
We have used the following changes of variables
$$v_i =z_i- \frac{q}2 \quad \mbox{and}\quad t_i =
\begin{cases}
v_i \mbox{ if } v_i \ge 0, \\
v_i+q-1 \mbox{ if } v_i< 0.
\end{cases}
$$
The second equality follows from Lemma \ref{lem:B-D} and Corollary \ref{cor:ideal-factorization}.
 \end{proof} 
 
\begin{example}\label{eg:B-parition}   
 Table \ref{tab:B5-Bpartition} shows the $B$-partition of an ideal $I=\{\alpha \in \Phi^+(B_5) \mid {\rm ht}(\alpha) \le 7\}$ (in colored region), with  $I^{0}, I^{-}, I^{+}$ are colored in red, yellow, blue, respectively. 
 Table \ref{tab:D5-partition-J} shows the corresponding partition of the ideal $J=I \smallsetminus I^{0}$ in $\Phi^+(D_5)$. 
 In this case, $\DP(I)=(7,7,5,3,1)$ and $\DP(J)=(4,5,5,3,1)$. Hence for even $q \in \Z_{>0}$, we have
   \begin{equation*} 
\chi^{\quasi}_{T_{I}}(B_\ell, q)=(q-2)(q-4)(q-5)(q-6)^2.
\end{equation*}
\end{example}

\begin{table}[htbp]
\centering
{\footnotesize\renewcommand\arraystretch{1.5} 
\begin{tabular}{ccccccc}
\mbox{Height} & & & && \\
9 &   $\epsilon_1+\epsilon_2$ &   &  & && \\
8 &  $\epsilon_1+\epsilon_3$ &   & & &&\\
\hhline{~--}
7 & \multicolumn{1}{|c}{\cellcolor{blue!25}{$\epsilon_1+\epsilon_4$}} & \multicolumn{1}{c|}{\cellcolor{blue!25}{$\epsilon_2+\epsilon_3$}}  & && \\
6 & \multicolumn{1}{|c}{\cellcolor{blue!25}{$\epsilon_1+\epsilon_5$}} & \multicolumn{1}{c|}{\cellcolor{blue!25}{$\epsilon_2+\epsilon_4$}}  & && \\
\cline{4-4}
5 & \multicolumn{1}{|c}{\cellcolor{red!25}{$\epsilon_1$}} & \cellcolor{blue!25}{$\epsilon_2+\epsilon_5$} &\multicolumn{1}{c|}{\cellcolor{blue!25}{$\epsilon_3 +\epsilon_4$}} &&& \\
4& \multicolumn{1}{|c}{\cellcolor{yellow!25}$\epsilon_1-\epsilon_5$} & \cellcolor{red!25}{$\epsilon_2$} & \multicolumn{1}{c|}{\cellcolor{blue!25}{$\epsilon_3 +\epsilon_5$}} &&&\\
\cline{5-5}
3 & \multicolumn{1}{|c}{\cellcolor{yellow!25}$\epsilon_1-\epsilon_4$} & \cellcolor{yellow!25}$\epsilon_2-\epsilon_5$ & \cellcolor{red!25}{$\epsilon_3$} &  \multicolumn{1}{c|}{\cellcolor{blue!25}{$\epsilon_4+\epsilon_5$}} &&\\
2& \multicolumn{1}{|c}{\cellcolor{yellow!25}$\epsilon_1-\epsilon_3$} & \cellcolor{yellow!25}$\epsilon_2-\epsilon_4$ & \cellcolor{yellow!25}$\epsilon_3-\epsilon_5$ & \multicolumn{1}{c|}{\cellcolor{red!25}{$\epsilon_4$}} &&\\
\cline{6-6}
1 & \multicolumn{1}{|c}{\cellcolor{yellow!25}$\epsilon_1-\epsilon_2$} & \cellcolor{yellow!25}$\epsilon_2 -\epsilon_3$& \cellcolor{yellow!25}$\epsilon_3-\epsilon_4$  & \cellcolor{yellow!25}$\epsilon_4-\epsilon_5$ & \multicolumn{1}{c|}{\cellcolor{red!25}{$\epsilon_5$}} &\\
\cline{2-6}
& 7 & 7 & 5 & 3  & 1 & $\DP(I)$
\end{tabular}
}
\bigskip
\caption{The $B$-partition of an ideal $I$ in $\Phi^+(B_5)$.}
\label{tab:B5-Bpartition}
\end{table}

\begin{table}[htbp]
\centering
{\footnotesize\renewcommand\arraystretch{1.5} 
\begin{tabular}{ccccccc}
\mbox{Height} & & & & &\\
7&   & $\epsilon_1+\epsilon_2$   &  & && \\
6&  & $\epsilon_1+\epsilon_3$    & & &&\\
\hhline{~~--}
5 & & \multicolumn{1}{|c}{\cellcolor{blue!25}{$\epsilon_1+\epsilon_4$}} & \multicolumn{1}{c|}{\cellcolor{blue!25}{$\epsilon_2+\epsilon_3$}}   && \\
\cline{2-2}
4& \multicolumn{1}{|c}{\cellcolor{yellow!25}$\epsilon_1-\epsilon_5$} &  \cellcolor{blue!25}{$\epsilon_1+\epsilon_5$}  & \multicolumn{1}{c|}{\cellcolor{blue!25}{$\epsilon_2+\epsilon_4$}} &&&\\
\cline{5-5}
3 & \multicolumn{1}{|c}{\cellcolor{yellow!25}$\epsilon_1-\epsilon_4$} & \cellcolor{yellow!25}$\epsilon_2-\epsilon_5$  &  \cellcolor{blue!25}{$\epsilon_2+\epsilon_5$} &\multicolumn{1}{c|}{\cellcolor{blue!25}{$\epsilon_3 +\epsilon_4$}}  &&\\
2& \multicolumn{1}{|c}{\cellcolor{yellow!25}$\epsilon_1-\epsilon_3$} & \cellcolor{yellow!25}$\epsilon_2-\epsilon_4$ &  \cellcolor{yellow!25}$\epsilon_3-\epsilon_5$  &\multicolumn{1}{c|}{\cellcolor{blue!25}{$\epsilon_3 +\epsilon_5$}}&& \\
\cline{6-6}
1 & \multicolumn{1}{|c}{\cellcolor{yellow!25}$\epsilon_1-\epsilon_2$} & \cellcolor{yellow!25}$\epsilon_2 -\epsilon_3$& \cellcolor{yellow!25}$\epsilon_3-\epsilon_4$  &  \cellcolor{yellow!25}$\epsilon_4-\epsilon_5$ & \multicolumn{1}{c|}{\cellcolor{blue!25}{$\epsilon_4+\epsilon_5$}} &\\
\cline{2-6}
& 4 & 5 & 5 & 3  & 1 & $\DP(J)$
\end{tabular}
}
\bigskip
\caption{The resulting partition of $J=I \smallsetminus I^{0}$ in $\Phi^+(D_5)$.}
\label{tab:D5-partition-J}
\end{table}

\begin{remark}
\label{rem:B-recover}
When $I=\Phi^+(B_\ell)$, $\DP(I)=(2\ell-1,2\ell-3,\ldots,3,1)$ and $\DP(J)=(\ell-1,2\ell-3,\ldots,3,1)$. 
Thus for even $q \in \Z_{>0}$, we have
   \begin{equation*} 
\chi^{\quasi}_{S_{\Phi^+}}(B_\ell, q)=\chi^{\quasi}_{T_{\Phi^+}}(B_\ell, q)=(q-2)(q-4)\ldots(q-(2\ell-2))(q-\ell),
\end{equation*}
which recovers the result of \cite[Theorem 4.8]{KTT07} for type $B$ root systems.
 \end{remark}

\subsection{Type $C$ root systems}
By \cite[Theorem 4.1]{KTT07}, for any $\Psi\subseteq\Phi^+(C_\ell)$ and for even $q \in \Z_{>0}$,  we have
\begin{equation*}\label{eq:C-TS}
\chi^{\quasi}_{S_\Psi}(C_\ell, q)=\frac12\left( \chi^{\quasi}_{T_\Psi}(C_\ell, q)+F_\Psi(C_\ell, q)\right),
\end{equation*}
\begin{align*}
F_\Psi(C_\ell, q) & :=\# \{ \textbf{z}\in  (\Z/q\Z)^\ell  \mid \textbf{z}\cdot T_\Psi+\textbf{g}\cdot S_\Psi\in ((\Z/q\Z)^\times)^{\#\Psi}\}, \\
\textbf{g} & := (\overline0,\overline0,\ldots,\overline1) \in  (\Z/q\Z)^\ell .
\end{align*} 
Let $I$ be an ideal of  $\Phi^+(C_\ell)$ with $\DP(I)=\SG(I)=(d_1,\ldots,d_\ell)$.
We need only consider $\epsilon_i + \epsilon_{j} \in I$ for some $1 \le i < j \le \ell$. 
In particular, $2\epsilon_k \in I$ for all $j \le k \le \ell$. 
Set $s:=\min \{ 1 \le k \le \ell \mid 2\epsilon_k \in I\}$. 
Define $R:=I \smallsetminus \{ \epsilon_i \pm \epsilon_{j} \in I \mid 1\le i<s, i <j \le \ell\}$. 
Thus $R$ itself is the positive system of a root system of type $C_{\ell-s+1}$ with a base given by $\Delta(C_{\ell-s+1})=\{\alpha_s,\ldots,\alpha_\ell\}$. 
Furthermore, for all $q \in \Z_{>0}$, we have
\begin{equation*}\label{eq:inductive-C}
\begin{aligned}
\chi^{\quasi}_{T_I}(C_\ell, q) & =\chi^{\quasi}_{T_R}(C_{\ell-s+1}, q)\cdot \prod_{i=1}^{s-1}(q-d_i),\\
F_I(C_\ell, q) &= F_R(C_{\ell-s+1}, q)\cdot \prod_{i=1}^{s-1}(q-d_i).
\end{aligned}
\end{equation*}
Then it suffices to consider $s=1$ or equivalently, $I=\Phi^+(C_\ell)$. 
The computations of $\chi^{\quasi}_{T_{\Phi^+}}(C_\ell, q)$, $F_{\Phi^+}(C_\ell, q)$ and $\chi^{\quasi}_{S_{\Phi^+}}(C_\ell, q)$ were already done in 
\cite[Theorem 4.7 and \S4.3]{KTT07}. 
More direct computations are also obtainable. 
For instance, when $q$ is even, we have
\begin{align*}
\chi^{\quasi}_{T_{\Phi^+}}(C_\ell, q) & =\# \{ \textbf{z}  \in   (\Z/q\Z)^\ell  \mid \overline{z_i} \notin \{ \overline0, \overline{q/2},\pm\overline{z_j}\}, 1 \le i <j \le \ell \}\\ 
&= \prod_{i=1}^{\ell}(q-(d_i+1)).
\end{align*}

\begin{example}\label{eg:C-ideal}   
 Table \ref{tab:C5-ideal} shows an example of an ideal $I\subsetneq  \Phi^+(C_5)$ (in enclosed region). 
 In this case, $\DP(I)=(4,6,5,3,1)$. Hence for even $q \in \Z_{>0}$, we have
\begin{align*}
\chi^{\quasi}_{T_{I}}(C_\ell, q) & =(q-6)^2(q-4)^2(q-2),\\ 
F_{\Phi^+}(C_\ell, q) &= (q-6)(q-4)^2(q-2)q,\\ 
\chi^{\quasi}_{S_{I}}(C_\ell, q) & = (q-6)(q-4)^2(q-3)(q-2).
\end{align*}
\end{example}

\begin{table}[htbp]
\centering
{\footnotesize\renewcommand\arraystretch{1.5} 
\begin{tabular}{ccccccc}
\mbox{Height} & & & && \\
9 &  $2\epsilon_1$ &   &  & && \\
8 & $\epsilon_1+\epsilon_2$ &   & & &&\\
7 &$\epsilon_1+\epsilon_3$ & $2\epsilon_2$  & && \\
\cline{3-3}
6 & $\epsilon_1+\epsilon_4$ & \multicolumn{1}{|c|}{$\epsilon_2+\epsilon_3$}  & && \\
 \cline{4-4}
5 &$\epsilon_1+\epsilon_5$ & \multicolumn{1}{|c}{$\epsilon_2+\epsilon_4$} &\multicolumn{1}{c|}{$2\epsilon_3$} &&& \\
\cline{2-2}
4& \multicolumn{1}{|c}{$\epsilon_1-\epsilon_5$} & $\epsilon_2+\epsilon_5$& \multicolumn{1}{c|}{$\epsilon_3 +\epsilon_4$} &&&\\
\cline{5-5}
3 & \multicolumn{1}{|c}{$\epsilon_1-\epsilon_4$} & $\epsilon_2-\epsilon_5$ &$\epsilon_3 +\epsilon_5$ &  \multicolumn{1}{c|}{$2\epsilon_4$} &&\\
2& \multicolumn{1}{|c}{$\epsilon_1-\epsilon_3$} & $\epsilon_2-\epsilon_4$ & $\epsilon_3-\epsilon_5$ &\multicolumn{1}{c|}{$\epsilon_4+\epsilon_5$}&&\\
\cline{6-6}
1 & \multicolumn{1}{|c}{$\epsilon_1-\epsilon_2$} & $\epsilon_2 -\epsilon_3$& $\epsilon_3-\epsilon_4$  &  $\epsilon_4-\epsilon_5$ & \multicolumn{1}{c|}{$2\epsilon_5$}  &\\
\cline{2-6}
& 4 & 6 & 5 & 3  & 1 & $\DP(I)$
\end{tabular}
}
\bigskip
\caption{An ideal $I$ in $\Phi^+(C_5)$.}
\label{tab:C5-ideal}
\end{table}

\subsection{Type $D$ root systems} 
The computation on this type requires a bit more effort.
By \cite[Theorem 4.1]{KTT07}, if $\Psi\subseteq\Phi^+(D_\ell)$ and $q$ is even,
\begin{equation}\label{eq:D-TS}
\chi^{\quasi}_{S_\Psi}(D_\ell, q)=\frac12\left( \chi^{\quasi}_{T_\Psi}(D_\ell, q)+F_\Psi(D_\ell, q)\right),
\end{equation}
\begin{align*}
F_\Psi(D_\ell, q) & :=\# \{ \textbf{z}\in  (\Z/q\Z)^\ell  \mid \textbf{z}\cdot T_\Psi+\textbf{g}\cdot S_\Psi\in ((\Z/q\Z)^\times)^{\#\Psi}\}, \\
\textbf{g} & := (\overline0,\overline0,\ldots,\overline1) \in  (\Z/q\Z)^\ell .
\end{align*} 

Let $I$ be an ideal of  $\Phi^+(D_\ell)$. 
We need only consider $\epsilon_i + \epsilon_{j} \in I$ for some $1 \le i < j \le \ell$. 
In particular, $\epsilon_{\ell-1}+\epsilon_\ell \in I$. 
Define
\begin{equation}\label{eq:s-D}
s :=\min \{ 2 \le k \le \ell \mid \epsilon_{k-1}+\epsilon_k  \in I\}.
\end{equation} 
If $\epsilon_{\ell-1}-\epsilon_{\ell}  \notin I$, we must have $s=\ell$. 
Then the computation can be reduced (up to a bijection) to that on the type $A$ root systems, which can be done easily.
Suppose henceforth that $\epsilon_{\ell-1}-\epsilon_{\ell}  \in I$, and set
\begin{equation*}\label{eq:r-D}
r :=\min \{ 1 \le k \le \ell \mid \epsilon_{k}+\epsilon_{\ell}  \in I \,\,\mbox{and}\,\, \epsilon_{k}-\epsilon_{\ell}  \in I\}.
\end{equation*} 
Obviously, $r \le s-1$. 
Assume that $\SG(I)=(p_1,\ldots,p_\ell)$ with for each $i$
\begin{equation}\label{eq:di-signed-graphs-D}
p_i = \# \{ \epsilon_i - \mu_{i,j}\epsilon_{j} \in I \mid \mu_{i,j} \in \{\pm1\}\}.
\end{equation} 
Define $R:=I \smallsetminus \{ \epsilon_i \pm\epsilon_{j} \in I \mid 1\le i<r, i <j \le \ell\}$. 
Thus $R$ is an ideal of the root subsystem of $\Phi(D_\ell)$ of  type $D_{\ell-r+1}$ with a base given by $\Delta(D_{\ell-r+1})=\{\alpha_r,\ldots,\alpha_\ell\}$. 
Furthermore, for all $q \in \Z_{>0}$, we have
\begin{equation*}\label{eq:inductive-D}
\begin{aligned}
\chi^{\quasi}_{T_I}(D_\ell, q) & =\chi^{\quasi}_{T_R}(D_{\ell-r+1}, q)\cdot \prod_{i=1}^{r-1}(q-p_i),\\
F_I(D_\ell, q) &= F_R(D_{\ell-r+1}, q)\cdot \prod_{i=1}^{r-1}(q-p_i).
\end{aligned}
\end{equation*}
Then it suffices to consider $r=1$ i.e., $\epsilon_1\pm\epsilon_{\ell}  \in I$. 
For such ideals, $p_{i}^{(-)} = p_{i+1}^{(-)}+1=\ell-i$ for $1 \le i \le \ell-1$.
Moreover, the subset $\{ \epsilon_i \pm \epsilon_{j} \mid s-1 \le i<j \le \ell\}\subseteq I$ is the positive system of a root subsystem of $\Phi(D_\ell)$ of type $D_{\ell-s+2}$. 
Thus $p_i \le p_{i+1}+1$, $p_{i}^{(+)} \le p_{i+1}^{(+)}$ for all $1 \le i \le s-3$, and $p_i+2=p_{i-1}$ for $s \le i \le \ell$. 
We will need the following lemma.

 \begin{lemma}
 \label{lem:needed-B}
Let $\Psi$ be a subset of $\Phi^+(B_\ell)$ such that $\{ \epsilon_i \pm \epsilon_{j} \mid s-1 \le i<j \le \ell\}\subseteq \Psi$ for some $2 \le s \le \ell$. 
Assume that $\SG(\Psi)=(p_1,\ldots,p_\ell)$ with  $p_i  \le p_{i+1}+1$ for all $1 \le i \le s-3$.
Then $\Psi$ is an ideal of $\Phi^+(B_{\ell})$.
\end{lemma}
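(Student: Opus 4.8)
The plan is to prove directly that $\Psi$ is a lower set (order ideal) of the root poset $(\Phi^+(B_\ell),\succeq)$. Since this poset is graded by height and $\beta$ covers $\gamma$ precisely when $\beta-\gamma$ is a simple root, it suffices to show: whenever $\beta\in\Psi$ and $\alpha_k\in\Delta(B_\ell)$ satisfy $\beta-\alpha_k\in\Phi^+(B_\ell)$, one has $\beta-\alpha_k\in\Psi$. So the first step is to read off all covering relations of $\Phi^+(B_\ell)$ from the base $\Delta(B_\ell)=\{\epsilon_1-\epsilon_2,\dots,\epsilon_{\ell-1}-\epsilon_\ell,\epsilon_\ell\}$.

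The convenient bookkeeping is to split $\Phi^+(B_\ell)$ into $\ell$ \emph{levels}: level $a$ $(1\le a\le\ell)$ consists of $\epsilon_a$ together with all $\epsilon_a\pm\epsilon_k$ $(a<k\le\ell)$. A short computation with the simple roots shows that each level is a chain of length $2(\ell-a)+1$,
\[
\epsilon_a+\epsilon_{a+1}\succ\epsilon_a+\epsilon_{a+2}\succ\cdots\succ\epsilon_a+\epsilon_\ell\succ\epsilon_a\succ\epsilon_a-\epsilon_\ell\succ\cdots\succ\epsilon_a-\epsilon_{a+1},
\]
consecutive members differing by a single simple root, and that every non-extremal member of level $a$ has, besides its successor in this chain, exactly one further cover, which lies in level $a+1$; the assignment $\epsilon_a\pm\epsilon_k\mapsto\epsilon_{a+1}\pm\epsilon_k$ $(k\ge a+2)$ and $\epsilon_a\mapsto\epsilon_{a+1}$ is an order isomorphism (the ``descent map'') from level $a$ with its top and bottom deleted onto level $a+1$. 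Consequently $\Psi$ is an ideal if and only if, for every $a$, (A) $\Psi\cap(\text{level }a)$ is a terminal segment of the level-$a$ chain, and (B) the descent map sends every root of $\Psi\cap(\text{level }a)$ at which it is defined into $\Psi\cap(\text{level }a+1)$.

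I would then establish (A) and (B) by downward induction on $a$, beginning at level $\ell$ (where both are trivial). For $a\ge s-1$ the inclusion $\{\epsilon_i\pm\epsilon_j\mid s-1\le i<j\le\ell\}\subseteq\Psi$ already places all long roots of level $a$ in $\Psi$, so on those levels the work reduces to locating the short roots $\epsilon_a$ and to transporting the (now automatic) covers across the $D_{\ell-s+2}$-subsystem carried by the coordinates $s-1,\dots,\ell$. For $a\le s-3$ the hypothesis $p_a\le p_{a+1}+1$ is used: by the inductive step $\Psi\cap(\text{level }a+1)$ is a terminal segment of its chain of size $p_{a+1}$, the descent map identifies the chain of level $a$ minus its two ends with the chain of level $a+1$ order-isomorphically, and $\Psi\cap(\text{level }a)$ has size $p_a$; since $p_a$ cannot exceed $p_{a+1}+1$, the $p_a$ roots of $\Psi$ in level $a$ cannot reach ``too high'' in the chain, which forces $\Psi\cap(\text{level }a)$ to be a terminal segment whose descent lands inside $\Psi\cap(\text{level }a+1)$. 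The remaining level $a=s-2$ and its interface with the block at level $s-1$ are handled by the same comparison, using again that $\Psi$ contains the whole $D$-subsystem on $s-1,\dots,\ell$.

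The step I expect to be the real obstacle is precisely this counting heart of the induction: turning the single numerical inequality $p_a\le p_{a+1}+1$ into the structural assertion that $\Psi\cap(\text{level }a)$ is a terminal segment with the correct descent. Doing so cleanly requires keeping the positive-edge, loop, and negative-edge portions of each level separately in view (so that the short root $\epsilon_a$, which sits in the middle of the chain and whose only descent is $\epsilon_{a+1}$, is not lost), and a careful treatment of the boundary values of $a$ near $s$. Everything else—the explicit list of covers of $\Phi^+(B_\ell)$ and the verification on the levels $a\ge s-1$—is routine once the level/chain picture is in place.
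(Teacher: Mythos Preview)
Your level decomposition and the reduction to conditions (A) and (B) are exactly the right framework, and are far more explicit than the paper's own argument (which merely notes that each positive root has at most two lower covers and then asserts, without justification, that any Hasse path from $\beta_1$ down to $\beta_2$ stays in $\Psi$). But the step you flagged as the obstacle is not just hard---it cannot be carried out, because the lemma as stated is false. The hypotheses only record the \emph{size} $p_a=|\Psi\cap(\text{level }a)|$, not which $p_a$ elements of the level-$a$ chain lie in $\Psi$; no inequality on the $p_a$'s can force $\Psi\cap(\text{level }a)$ to be a terminal segment. Concretely, take $\ell=3$, $s=3$, and
\[
\Psi=\{\epsilon_2+\epsilon_3,\ \epsilon_2-\epsilon_3,\ \epsilon_1+\epsilon_3\}.
\]
Then $\{\epsilon_i\pm\epsilon_j\mid s-1\le i<j\le\ell\}=\{\epsilon_2\pm\epsilon_3\}\subseteq\Psi$ and the condition $p_i\le p_{i+1}+1$ for $1\le i\le s-3=0$ is vacuous, yet $\epsilon_1+\epsilon_3\succ\epsilon_1\notin\Psi$, so $\Psi$ is not an ideal. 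Even at level $s-1=2$ your condition (A) already fails: $\Psi\cap(\text{level }2)=\{\epsilon_2+\epsilon_3,\epsilon_2-\epsilon_3\}$ is not a terminal segment of $\epsilon_2+\epsilon_3>\epsilon_2>\epsilon_2-\epsilon_3$.

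What is actually true, and what the paper needs, is that the specific subsets $K$ and $U_k$ to which the lemma is applied already satisfy (A) at every level by construction: they are built from a $D_\ell$-ideal $I$ (so each $I\cap(\text{level }a)$ is already a terminal segment of the $D_\ell$ level chain) by adjoining all loops $\epsilon_a$ or by contracting one coordinate. If you add (A) as an explicit hypothesis---equivalently, require that $\epsilon_a+\epsilon_j\in\Psi\Rightarrow\epsilon_a+\epsilon_{j+1}\in\Psi$, $\epsilon_a+\epsilon_\ell\in\Psi\Rightarrow\epsilon_a\in\Psi$, $\epsilon_a\in\Psi\Rightarrow\epsilon_a-\epsilon_\ell\in\Psi$, and $\epsilon_a-\epsilon_j\in\Psi\Rightarrow\epsilon_a-\epsilon_{j-1}\in\Psi$---then your inductive argument for (B) via $p_a\le p_{a+1}+1$ goes through cleanly: the descent map sends the bottom $p_a-1$ elements of level $a$ to the bottom $p_a-1$ elements of level $a+1$, and $p_a-1\le p_{a+1}$ is exactly the hypothesis. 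That is the correct statement to prove, and you should then verify (A) directly for $K$ and for each $U_k$.
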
 

\begin{proof} 
For $\beta_1,\beta_2 \in \Phi^+(B_\ell)$, $\beta_1 \succeq \beta_2$, $\beta_ 1 \in \Psi$, we will prove that $\beta_2 \in \Psi$. 
Note that for each $\beta\in \Phi^+(B_\ell)$, we have $\# \{ \gamma \in \Phi^+(B_\ell) \mid \beta - \gamma \in \Delta(B_\ell)\} \le 2$.
Since $\beta_1 \succeq \beta_2$, there exists a path in the Hasse diagram of $\Phi^+(B_\ell)$ connecting $\beta_1$ and $\beta_2$.\footnote{This fact is true for any root system, which is a consequence of, e.g., \cite[Lemma 3.2]{S05}.} 
It follows that this path must lie entirely within $\Psi$, yielding $\beta_2 \in \Psi$.
 \end{proof}

Let $\Pi$ be an irreducible root system of type $B_{\ell-1}$ with a base given by $\Delta=\{\alpha_i = \epsilon_i - \epsilon_{i+1}\,(1 \le i  \le \ell-2),\, \alpha_{\ell-1} =\epsilon_{\ell-1} \}$.
We define a sequence of subsets $\{U_k\}_{k=1}^\ell$ (depending on $I$) of $\Pi^+(B_{\ell-1})$ classified into two types as follows:
\begin{enumerate}[(i)]
\item Type I, 
   \begin{equation}\label{eq:1st-Uk}
   \SG(U_k)=(p_1,\ldots,p_{k-1}, \widehat{p_{k}}, p_{k+1}+1,\ldots,p_{\ell}+1), 
\end{equation}
for $1 \le k \le s-2$. Here $\widehat{p_{k}}$ means omission.

\item
Type II,
  \begin{equation}\label{eq:2nd-Uk}
  \SG(U_k)=(p_1-\tau_{1,k},\ldots,p_{s-2}-\tau_{s-2,k}, p_{s-1}-1,\ldots,p_{\ell-1}-1), 
\end{equation}
 for $s-1\le k \le \ell$,  $1 \le n \le s-2$, with
\begin{equation*}
\tau_{n,k}:=
\begin{cases}
0 \quad\mbox{ if $\epsilon_n+\epsilon_k  \notin I$} \\ 
1 \quad\mbox{ if $\epsilon_n+\epsilon_k   \in I$.}
\end{cases}
\end{equation*}
\end{enumerate}
It is easily seen that $\tau_{n,k} \le \tau_{n+1,k}$ (as well as $\tau_{n,k} \le \tau_{n,k+1}$), hence $p_{n}-\tau_{n,k} \le p_{n+1}-\tau_{n+1,k}+1$ for all $1 \le n \le s-3$.
By Lemma \ref{lem:needed-B}, the subsets $\{U_k\}_{k=1}^\ell$ are indeed ideals of $\Pi^+(B_{\ell-1})$. We also define
  \begin{equation}\label{eq:D-2nd-summand}
K:=I \sqcup \{\epsilon_k \mid 1 \le k \le \ell\}.
\end{equation}
Then again by Lemma \ref{lem:needed-B}, $K$ is an ideal of $\Phi^+(B_{\ell})$ with   
\begin{equation*}\label{eq:DP(K)}
   \SG(K)=(p_1+1, p_{2}+1,\ldots,p_{\ell}+1).
\end{equation*}
The following result is a generalization of \cite[Lemma 4.4(12)]{KTT07}.
\begin{lemma} \label{lem:D-to-B} 
Let $I$ be an ideal of $\Phi^+(D_\ell)$ so that $\epsilon_1\pm\epsilon_{\ell}  \in I$. 
For all $q \in \Z_{>0}$, we have
  \begin{equation}\label{eq:D-to-B}
\chi^{\quasi}_{T_{I}}(D_\ell, q)= \sum_{k=1}^{\ell}\chi^{\quasi}_{T_{U_k}}(B_{\ell-1}, q)+\chi^{\quasi}_{T_K}(B_{\ell}, q),
\end{equation}
where $U_k$ and $K$are defined in \eqref{eq:1st-Uk}, \eqref{eq:2nd-Uk}, \eqref{eq:D-2nd-summand}.
\end{lemma}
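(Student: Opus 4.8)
The plan is to count the set defining $\chi^{\quasi}_{T_I}(D_\ell, q)$ directly and partition it according to the ``position of the special coordinate'' associated to the $\epsilon_\ell$-node of the signed graph. Recall that
\[
\chi^{\quasi}_{T_I}(D_\ell, q)=\#\left\{\textbf{z}\in(\Z/q\Z)^\ell \;\middle|\;
\begin{array}{c}
\overline{z_i}\ne\overline{z_j}\ (\epsilon_i-\epsilon_j\in I),\\
\overline{z_i}+\overline{z_j}\ne\overline 0\ (\epsilon_i+\epsilon_j\in I)
\end{array}\right\}.
\]
Since $\epsilon_1\pm\epsilon_\ell\in I$ and $I$ is an ideal, every root $\epsilon_i\pm\epsilon_\ell$ with $1\le i\le\ell-1$ lies in $I$; hence in any valid tuple the last coordinate $\overline{z_\ell}$ must avoid both $\overline{z_i}$ and $-\overline{z_i}$ for all $i<\ell$. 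I would split the count according to the value $v:=\overline{z_\ell}$. If $2v\ne\overline 0$ (equivalently $v\notin\{\overline 0,\overline{q/2}\}$ when $q$ is even, or just $v\ne\overline 0$ in general), then $v$ and $-v$ are distinct, and the conditions on the remaining $\ell-1$ coordinates become: they avoid $v$, avoid $-v$, satisfy the $D_{\ell-1}$-type relations inherited from $I$, plus the inherited ``$+$'' relations $\overline{z_i}+\overline{z_j}\ne\overline 0$. After a translation $\overline{z_i}\mapsto\overline{z_i}-v$ (or a reflection coupled with translation), the pair of forbidden values $\{v,-v\}$ turns into $\{\overline 0,-2v\}$ with $-2v\ne\overline 0$, and one recognizes precisely the defining count of a type $B_{\ell-1}$ characteristic quasi-polynomial for an ideal $U_k$ — the omitted index $k$ records which $\epsilon$-coordinate plays the role of the node adjacent to nothing new, and the $\pm1$ shifts in \eqref{eq:1st-Uk}, \eqref{eq:2nd-Uk} encode how the relations $\overline{z_i}\pm\overline{z_j}\ne 0$ get absorbed. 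When instead $v\in\{\overline 0\}$ (and $v=\overline{q/2}$ is handled by the same reflection trick, or is excluded), the conditions on the first $\ell-1$ coordinates become exactly those defining an ideal $K$ of type $B_\ell$ after adjoining the ``loop'' roots $\epsilon_k$: this yields the term $\chi^{\quasi}_{T_K}(B_\ell,q)$ via \eqref{eq:D-2nd-summand}, since requiring $\overline{z_i}\ne\overline 0$ is the $B$-type short-root condition.

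Concretely, the steps in order are: (1) write out the membership conditions defining $\chi^{\quasi}_{T_I}(D_\ell,q)$ and note that $\epsilon_i\pm\epsilon_\ell\in I$ for all $i<\ell$; (2) partition the tuples by the value $v=\overline{z_\ell}$, grouping $v$ according to which ``slot'' among $\overline{z_1},\dots,\overline{z_{\ell-1}}$ it equals or is negatives-of — more precisely, parametrize by whether $v$ is forced equal to some existing coordinate value or is genuinely free; (3) for each resulting class, apply the appropriate affine change of coordinates ($\overline{z_i}\mapsto\overline{z_i}-v$, possibly composed with $\overline{z_i}\mapsto-\overline{z_i}$ on a block) to convert the $D$-type constraints into $B$-type constraints, carefully tracking how each relation $\epsilon_i\pm\epsilon_j\in I$ maps; (4) identify the transformed constraint set with the defining count of $\chi^{\quasi}_{T_{U_k}}(B_{\ell-1},q)$ for $1\le k\le\ell$ (Type I when $k\le s-2$, Type II when $s-1\le k\le\ell$) and of $\chi^{\quasi}_{T_K}(B_\ell,q)$ for the ``$v$ free'' class; (5) check that the $U_k$ and $K$ so produced are exactly those in \eqref{eq:1st-Uk}--\eqref{eq:D-2nd-summand}, which is where Lemma \ref{lem:needed-B} guarantees they are legitimate ideals and where the $\SG$ computations pin down the shifts.

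The main obstacle will be step (4): bookkeeping the bijection so that the shifted signed-graph sequences match \eqref{eq:1st-Uk} and \eqref{eq:2nd-Uk} on the nose. The subtlety is that when we translate by $v=\overline{z_k}$ for a fixed index $k\le s-2$, the role of coordinate $k$ is consumed (hence $\widehat{p_k}$), and each surviving relation $\epsilon_i+\epsilon_j\in I$ or $\epsilon_i-\epsilon_j\in I$ with $i,j\ne k$ must be re-read as a $B_{\ell-1}$ relation — some become short-root (loop) conditions $\overline{z_i}\ne\overline 0$, which is exactly the $+1$ appended to $p_{k+1},\dots,p_\ell$. For the Type II range $s-1\le k\le\ell$, the parameters $\tau_{n,k}$ precisely record which of the ``upper'' relations $\epsilon_n+\epsilon_k$ were present in $I$ and thus survive the substitution, so one must verify the arithmetic $p_n-\tau_{n,k}$ term by term. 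I would organize this verification by treating the three blocks of indices ($1\le n\le s-2$, $n=s-1$, and $s\le n\le\ell-1$) separately, using the structural facts noted just before the lemma ($p_i\le p_{i+1}+1$, $p_i+2=p_{i-1}$ for $s\le i\le\ell$, and that $\{\epsilon_i\pm\epsilon_j\mid s-1\le i<j\le\ell\}\subseteq I$). Everything else — the affine changes of variables and the final reassembly of the disjoint union — is routine once the dictionary between $D$-relations and $B$-relations under translation is fixed.
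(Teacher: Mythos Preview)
Your partition is the wrong one, and the affine change of variables you propose does not convert the $D$-type constraints into $B$-type constraints. Concretely: after translating $\overline{z_i}\mapsto\overline{z_i}-v$, the condition $\overline{z_i}+\overline{z_j}\ne\overline 0$ becomes $\overline{z_i}+\overline{z_j}\ne -2v$, which still depends on $v$ and is not a condition cut out by any $B_{\ell-1}$-root. No reflection fixes this, since to match the $B$-type short-root condition $\overline{z_i}\ne\overline 0$ and the $B$-type condition $\overline{z_i}+\overline{z_j}\ne\overline 0$ simultaneously you would need $v=\overline 0$. Moreover, your step (2) --- ``grouping $v$ according to which slot among $\overline{z_1},\dots,\overline{z_{\ell-1}}$ it equals or is negatives-of'' --- is vacuous: since $\epsilon_i\pm\epsilon_\ell\in I$ for every $i<\ell$, the value $v=\overline{z_\ell}$ is \emph{never} equal to $\pm\overline{z_i}$, so there is nothing to group by. The index $k$ in the statement ranges over $\{1,\dots,\ell\}$, not over values of $\overline{z_\ell}$ in $\Z/q\Z$; your scheme cannot produce $\ell$ pieces.

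The paper's argument is different and much shorter: it applies the Deletion--Contraction formula for characteristic quasi-polynomials $\ell$ times, at each step adjoining the short root $\epsilon_k$ (for $k=\ell,\ell-1,\dots,1$) to the list and splitting off the contraction $T_{I\cup\{\epsilon_\ell,\dots,\epsilon_k\}}/T_{\{\epsilon_k\}}$, which is identified with $T_{U_k}$ in $B_{\ell-1}$. Combinatorially this amounts to partitioning the tuples $\textbf{z}\in(\Z/q\Z)^\ell$ not by the value of $\overline{z_\ell}$ but by the \emph{largest index $k$ with $\overline{z_k}=\overline 0$} (the case ``no coordinate is zero'' giving the $K$ term). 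Under that partition, setting $\overline{z_k}=\overline 0$ genuinely turns every constraint $\overline{z_i}\pm\overline{z_k}\ne\overline 0$ into the $B$-type loop constraint $\overline{z_i}\ne\overline 0$, and the remaining $\pm$-constraints among $i,j\ne k$ are already $B_{\ell-1}$-type; no translation is needed. If you want a direct counting proof rather than citing Deletion--Contraction, redo it with this partition and the identification of each piece with $U_k$ (respectively $K$) follows immediately from the definitions \eqref{eq:1st-Uk}, \eqref{eq:2nd-Uk}, \eqref{eq:D-2nd-summand}.
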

\begin{proof} 
With the notion of contraction lists (e.g., \cite[Section 2]{Tan18}), we can write $\chi^{\quasi}_{T_{U_k}}(B_{\ell-1}, q)=\chi^{\quasi}_{\A_k}(B_\ell, q)$ with $\A_k :=T_{I\cup\{\epsilon_\ell,\ldots,\epsilon_{k}\}}/T_{\{\epsilon_k\}}$ for $1 \le k \le \ell$. 
For all $q \in \Z_{>0}$, by applying the Deletion-Contraction formula \cite[Theorem 3.5]{Tan18} recursively, we get
\begin{align*}
\chi^{\quasi}_{T_{I}}(D_\ell, q)
& =  \chi^{\quasi}_{T_{U_\ell}}(B_{\ell-1}, q) + \chi^{\quasi}_{T_{I\cup\{\epsilon_\ell\}}}(B_\ell, q)\\
& =  \chi^{\quasi}_{T_{U_\ell}}(B_{\ell-1}, q) + \chi^{\quasi}_{T_{U_{\ell-1}}}(B_{\ell-1}, q) +\chi^{\quasi}_{T_{I\cup\{\epsilon_\ell, \epsilon_{\ell-1}\}}}(B_\ell, q)\\
& =\ldots\\
& =\sum_{k=1}^{\ell}\chi^{\quasi}_{T_{U_k}}(B_{\ell-1}, q)+\chi^{\quasi}_{T_K}(B_{\ell},q).
\end{align*}
\end{proof}

In Lemma \ref{lem:D-to-B-F} and Theorem \ref{thm:D-to-B-L} below, we use the same assumption and notation as in Lemma \ref{lem:D-to-B}.
\begin{lemma} \label{lem:D-to-B-F} 
For even $q \in \Z_{>0}$, we have
  \begin{equation*}\label{eq:D-to-B-F}
F_I(D_\ell, q) =F_{I\cup\{2\epsilon_s,\ldots,2\epsilon_{\ell}\}}(C_\ell, q)=\prod_{i=1}^{\ell}(q-p_i).
\end{equation*}
\end{lemma}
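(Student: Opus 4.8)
The plan is to establish the two claimed equalities separately. For the first equality $F_I(D_\ell, q) = F_{I\cup\{2\epsilon_s,\ldots,2\epsilon_\ell\}}(C_\ell, q)$, I would unwind the definition of $F$. Recall $F_\Psi(D_\ell,q)=\#\{\mathbf{z}\in(\Z/q\Z)^\ell \mid \mathbf{z}\cdot T_\Psi + \mathbf{g}\cdot S_\Psi \in ((\Z/q\Z)^\times)^{\#\Psi}\}$ where $\mathbf{g}=(\overline0,\ldots,\overline0,\overline1)$. The key observation is that $\mathbf{g}\cdot S_\Psi$ only shifts the defining inequalities by the last-coordinate entries of the coefficient matrix of each root with respect to $\Delta$. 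Using the explicit height expansions for $\Phi^+(D_\ell)$ recalled in the excerpt, the roots $\epsilon_i-\epsilon_j$ (with $i<j<\ell$ or $j=\ell$) have coefficient $0$ on $\alpha_\ell$, while $\epsilon_i+\epsilon_j$ has coefficient $1$ on $\alpha_\ell$ (for $i<j<\ell$) and $\epsilon_i+\epsilon_\ell$ has coefficient $1$ on $\alpha_\ell$; so $\mathbf{g}\cdot S_\Psi$ contributes $\overline0$ to the $\epsilon_i-\epsilon_j$-constraints and $\overline1$ to the $\epsilon_i+\epsilon_j$-constraints. Hence $F_I(D_\ell,q)$ counts $\mathbf{z}$ with $\overline{z_i}\ne\overline{z_j}$ for $\epsilon_i-\epsilon_j\in I$ and $\overline{z_i}+\overline{z_j}+\overline1\ne\overline0$ (equivalently $\overline{z_i}+\overline{z_j}\ne\overline{-1}$) for $\epsilon_i+\epsilon_j\in I$. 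For $\Phi^+(C_\ell)$ one checks from the given expansions that $2\epsilon_i$ has $\alpha_\ell$-coefficient $1$, $\epsilon_i-\epsilon_j$ has $\alpha_\ell$-coefficient $0$, and $\epsilon_i+\epsilon_j$ has $\alpha_\ell$-coefficient $1$; thus adjoining the $2\epsilon_i$ for $s\le i\le\ell$ adds constraints $2\overline{z_i}\ne\overline{-1}$ while leaving the rest identical, and one argues these extra constraints are automatically satisfied (or vacuous for the count) — this needs care, see below.

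For the second equality $F_{I\cup\{2\epsilon_s,\ldots,2\epsilon_\ell\}}(C_\ell,q)=\prod_{i=1}^\ell(q-p_i)$, the strategy is a direct counting argument with a change of variables analogous to the one in the proof of Theorem \ref{thm:B-and-D}. Since $q$ is even, substitute $w_i := 2z_i \bmod q$ or, better, rewrite the constraint $\overline{z_i}+\overline{z_j}\ne\overline{-1}$ together with $\overline{z_i}\ne\overline{z_j}$ in terms of shifted variables so that the $\epsilon_i+\epsilon_j$ and $\epsilon_i-\epsilon_j$ conditions become symmetric. Concretely, after an affine substitution the count becomes $\#\{\mathbf{u}\in(\Z/q\Z)^\ell \mid \overline{u_i}\ne\pm\overline{u_j}$ whenever $\epsilon_i-\epsilon_j$ or $\epsilon_i+\epsilon_j$ lies in the relevant ideal, and $\overline{u_i}\notin\{\overline0\}$ or $\{\overline0,\overline{q/2}\}$ according to the loop/$2\epsilon_i$ data$\}$, which is exactly a count of the type handled for $\Phi^+(C_\ell)$. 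Then I would peel off coordinates one at a time in order of increasing index: choosing $u_1$ first excludes $p_1$ forbidden values (coming from the $p_1$ edges/loops at vertex $1$ recorded in $\SG(I\cup\{2\epsilon_s,\ldots\})$, which equals $(p_1,\ldots,p_\ell)$ after the substitution — using that the number of constraints at vertex $i$ is precisely $p_i$), giving a factor $q-p_i$ at each stage. The ideal property (via the established height/dual-partition bookkeeping and Lemma \ref{lem:needed-B}) guarantees that the forbidden values at vertex $i$ are all distinct and independent of the earlier choices, so the product formula $\prod_{i=1}^\ell(q-p_i)$ follows.

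The main obstacle I anticipate is the bookkeeping in the first equality: one must verify that adjoining the long roots $2\epsilon_s,\ldots,2\epsilon_\ell$ to $I$ and passing from the $D_\ell$-count to the $C_\ell$-count introduces no net change. The subtlety is that the $2\epsilon_i$-constraint in $F_\bullet(C_\ell,q)$ reads $2\overline{z_i}+\overline1\ne\overline0$, i.e. $2z_i\not\equiv -1\pmod q$; since $q$ is even, $2z_i$ is always even while $-1$ is odd, so this constraint is \emph{automatically satisfied} and hence adding it does not change the count. This is the crucial parity point and it is exactly why the statement restricts to even $q$ and why the comparison is with type $C$ rather than type $B$. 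Once this is observed, the first equality is immediate. The remaining work — justifying the affine substitution and the coordinate-peeling count — is routine and parallels \cite[Lemma 4.4]{KTT07} and the proof of Theorem \ref{thm:B-and-D}; I would present it compactly, citing the height placements in Table \ref{tab:placement} and the formula \eqref{eq:di-signed-graphs-D} for $p_i$ to identify the number of forbidden values at each vertex.
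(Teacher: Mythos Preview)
Your treatment of the first equality is correct and in fact spells out what the paper's one-line proof leaves implicit: the $\alpha_\ell$-coefficients of the roots in $D_\ell$ and $C_\ell$ agree, and the added constraints $2\overline{z_i}+\overline1\ne\overline0$ coming from the long roots $2\epsilon_i$ are vacuous for even $q$ by parity. That is exactly the point.

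For the second equality your plan has a genuine gap. The affine substitution you propose---shifting so that $\overline{z_i}+\overline{z_j}\ne\overline{-1}$ becomes $\overline{u_i}+\overline{u_j}\ne\overline0$---requires solving $2c\equiv1\pmod q$, which is impossible precisely when $q$ is even; the doubling map $w_i=2z_i$ is likewise non-injective. So no change of variables produces the symmetric $\pm$ picture you describe. The peeling argument can still be made to work, but it must be run directly on the asymmetric constraints $z_i\ne z_j$ and $z_i+z_j\ne -1$, and with the variables peeled in \emph{decreasing} index (equivalently: for each valid choice of $z_{i+1},\ldots,z_\ell$, count the admissible $z_i$). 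One then checks, using the ideal property of $I$ in $\Phi^+(D_\ell)$, that the $p_i$ forbidden values $\{z_j:\epsilon_i-\epsilon_j\in I\}\cup\{-1-z_j:\epsilon_i+\epsilon_j\in I\}$ are pairwise distinct; for the cross case $z_j=-1-z_{j'}$ this uses that $\epsilon_i+\epsilon_{j'}\succeq \epsilon_{\min(j,j')}+\epsilon_{\max(j,j')}$, and the diagonal case $j=j'$ is excluded because $2z_j\equiv -1$ has no solution. Your sketch does not supply this verification and has the order reversed.

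The paper takes a different route: rather than a direct count, it observes (via the height table) that $I\cup\{2\epsilon_s,\ldots,2\epsilon_\ell\}$ is a type-$C$ ideal whose $F$-polynomial has already been computed in the type-$C$ subsection (recursively reducing to $F_{\Phi^+}(C_{\ell-s+1},q)$ from \cite{KTT07}), and that $p_i=2(\ell-i)$ for $i\ge s$ matches the resulting factors. Your direct-count approach is more self-contained but needs the repairs above; the paper's approach is shorter because it recycles the type-$C$ computation.
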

\begin{proof} 
This follows from the height placements in Table \ref{tab:placement}.
 \end{proof} 

\begin{theorem} \label{thm:D-to-B-L} 
For even $q \in \Z_{>0}$, we have
 \begin{equation*}\label{eq:D-to-B-L}
\chi^{\quasi}_{S_{I}}(D_\ell, q)= \frac12\left(\sum_{k=1}^{\ell}\chi^{\quasi}_{T_{U_k}}(B_{\ell-1}, q)+\chi^{\quasi}_{T_K}(B_{\ell}, q) +\prod_{i=1}^{\ell}(q-p_i) \right).
\end{equation*}
\end{theorem}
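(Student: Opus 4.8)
The plan is to assemble the identity directly from the two preceding lemmas together with the type~$D$ specialization of the finite-field formula. First I would invoke \eqref{eq:D-TS}, which for even $q$ expresses the root-lattice characteristic quasi-polynomial as the average
\[
\chi^{\quasi}_{S_I}(D_\ell, q)=\frac12\left(\chi^{\quasi}_{T_I}(D_\ell, q)+F_I(D_\ell, q)\right).
\]
Then I would substitute Lemma~\ref{lem:D-to-B} into the first summand, replacing $\chi^{\quasi}_{T_I}(D_\ell, q)$ by $\sum_{k=1}^{\ell}\chi^{\quasi}_{T_{U_k}}(B_{\ell-1}, q)+\chi^{\quasi}_{T_K}(B_{\ell}, q)$, and Lemma~\ref{lem:D-to-B-F} into the second summand, replacing $F_I(D_\ell, q)$ by $\prod_{i=1}^{\ell}(q-p_i)$. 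Collecting terms yields precisely the claimed formula, so the proof I would write is essentially a one-line chaining of these three displayed equalities.

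The only point that deserves a word of care is that the computation takes place under the standing reductions of this subsection: one may assume $\epsilon_i+\epsilon_j\in I$ for some $i<j$ (otherwise the type~$A$ formula \eqref{eq:type-A} applies), $\epsilon_{\ell-1}-\epsilon_\ell\in I$ (otherwise $s=\ell$ and the problem again reduces to type~$A$), and $r=1$, i.e.\ $\epsilon_1\pm\epsilon_\ell\in I$ (the general case following from the inductive reduction formulas displayed just before Lemma~\ref{lem:needed-B}). These are exactly ``Lemma~\ref{lem:D-to-B}'s assumptions'', under which the sets $\{U_k\}_{k=1}^{\ell}$ and $K$ have already been verified to be ideals of the relevant classical root systems via Lemma~\ref{lem:needed-B}; hence every symbol on the right-hand side is a genuine ideal characteristic quasi-polynomial. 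If desired one can push further: each $\chi^{\quasi}_{T_{U_k}}(B_{\ell-1}, q)$ and $\chi^{\quasi}_{T_K}(B_{\ell}, q)$ can be made fully explicit by Theorem~\ref{thm:B-and-D} (for the ones containing some $\epsilon_1$) together with \eqref{eq:type-A}, producing a closed product expression, but that refinement is not needed for the statement as phrased.

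As for where the difficulty sits, it is entirely upstream rather than in this final step. Lemma~\ref{lem:D-to-B} is the engine, and its proof rests on applying the deletion--contraction formula for characteristic quasi-polynomials recursively together with the bookkeeping that matches each contraction with the correct $U_k$; Lemma~\ref{lem:D-to-B-F} is just a matter of reading the height data off Table~\ref{tab:placement}. Granting those inputs, Theorem~\ref{thm:D-to-B-L} follows as an immediate corollary of \eqref{eq:D-TS}, and there is no remaining obstacle.
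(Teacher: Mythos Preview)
Your proposal is correct and matches the paper's own proof essentially verbatim: the paper simply states that the theorem follows from formula \eqref{eq:D-TS} together with Lemmas \ref{lem:D-to-B} and \ref{lem:D-to-B-F}. Your additional remarks about the standing hypothesis $\epsilon_1\pm\epsilon_\ell\in I$ and the location of the real work upstream are accurate and appropriate context.
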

\begin{proof} 
This follows from formula \eqref{eq:D-TS}, and Lemmas \ref{lem:D-to-B},  \ref{lem:D-to-B-F}.
 \end{proof} 

\begin{example}\label{eg:D-ideal}   
 Table \ref{tab:D5-ideal-eg} shows an example of the ideal $I=\{\alpha \in \Phi^+(D_5) \mid {\rm ht}(\alpha) \le 6\}$ (in colored region), with positive roots contributing to $p_1$, $p_2$, $p_3$, $p_4$ are colored in green, yellow, blue, red, respectively. 
In this case, $s=2$ since $\epsilon_2+\epsilon_3 \in I$, but $\epsilon_1+\epsilon_2 \notin I$.
We have $\SG(I)=(7,6,4,2,0)$, and the computation on the ideals $K$ and $U_k$ for even $q \in \Z_{>0}$ is given in Table \ref{tab:data-D}. 
By Theorem \ref{thm:D-to-B-L}, for even $q \in \Z_{>0}$, we have
$$\chi^{\quasi}_{S_{I}}(D_\ell, q)=(q-2)(q-4)(q^3-13q^2+51q-51).$$
\begin{table}[htbp]
\centering
{\footnotesize\renewcommand\arraystretch{1.5} 
\begin{tabular}{K{1.5cm}|K{2.5cm}|K{2.5cm}}
Ideals & $\DP$ & Roots of $\chi^{\quasi}_{T}$  \\
\hline\hline
$K$ & $(8,7,5,3,1)$ & $7,6,5,4,2$ \\
\hline
$U_1, U_2$ & $(7,5,3,1)$ & $6,4,4,2$ \\
\hline
$U_3, U_4, U_5$ & $(6,5,3,1)$ & $5,4,4,2$ \\
\end{tabular}
}
\bigskip
\caption{Computation of Example \ref{eg:D-ideal}.}
\label{tab:data-D}
\end{table}
\end{example}
 
 \begin{table}[htbp]
\centering
{\footnotesize\renewcommand\arraystretch{1.5} 
\begin{tabular}{ccccccc}
\mbox{Height} & & & & &\\
7&   & $\epsilon_1+\epsilon_2$   &  & && \\
\hhline{~~-}
6&  &  \multicolumn{1}{|c|}{\cellcolor{green!50}{$\epsilon_1+\epsilon_3$}}   & & &&\\
\cline{4-4}
5 &&\multicolumn{1}{|c}{\cellcolor{green!50}{$\epsilon_1+\epsilon_4$}}   &\multicolumn{1}{c|}{\cellcolor{yellow!25}{$\epsilon_2+\epsilon_3$}}  && \\
\cline{2-2}
4& \multicolumn{1}{|c}{\cellcolor{green!50}$\epsilon_1-\epsilon_5$} &  \cellcolor{green!50}{$\epsilon_1+\epsilon_5$}  & \multicolumn{1}{c|}{\cellcolor{yellow!25}{$\epsilon_2+\epsilon_4$}} &&&\\
\cline{5-5}
3 & \multicolumn{1}{|c}{\cellcolor{green!50}$\epsilon_1-\epsilon_4$} & \cellcolor{yellow!25}$\epsilon_2-\epsilon_5$  &  \cellcolor{yellow!25}{$\epsilon_2+\epsilon_5$} &\multicolumn{1}{c|}{\cellcolor{blue!25}{$\epsilon_3 +\epsilon_4$}}  &&\\
2& \multicolumn{1}{|c}{\cellcolor{green!50}$\epsilon_1-\epsilon_3$} & \cellcolor{yellow!25}$\epsilon_2-\epsilon_4$ &  \cellcolor{blue!25}$\epsilon_3-\epsilon_5$  &\multicolumn{1}{c|}{\cellcolor{blue!25}{$\epsilon_3 +\epsilon_5$}}&& \\
\cline{6-6}
1 & \multicolumn{1}{|c}{\cellcolor{green!50}$\epsilon_1-\epsilon_2$} & \cellcolor{yellow!25}$\epsilon_2 -\epsilon_3$& \cellcolor{blue!25}$\epsilon_3-\epsilon_4$  &  \cellcolor{red!25}$\epsilon_4-\epsilon_5$ & \multicolumn{1}{c|}{\cellcolor{red!25}{$\epsilon_4+\epsilon_5$}} &\\
\cline{2-6}
& 7 & 6 & 4 & 2  & 0 & $\SG(I)$
\end{tabular}
}
\bigskip
\caption{$I=\{\alpha \in \Phi^+(D_5) \mid {\rm ht}(\alpha) \le 6\}$ in $\Phi^+(D_5)$.}
\label{tab:D5-ideal-eg}
\end{table}

\begin{remark}
\label{rem:D-recover}
When $I=\Phi^+(D_\ell)$, $\SG(I)=(2\ell-2,2\ell-4,\ldots,2,0)$, $\DP(I)=(\ell-1, 2\ell-3,\ldots,3,1)$, $\DP(K)=(2\ell-1,2\ell-3,\ldots,3,1)$, and 
$\DP(U_k)=(2\ell-3,\ldots,3,1)$ for all $1 \le k \le \ell$. 
Note that $s=2$, so there is no ideal $U_k$ of type I.
Then by Lemma \ref{lem:D-to-B}, for odd $q \in \Z_{>0}$
   \begin{equation*} 
\chi^{\quasi}_{S_{\Phi^+}}(D_\ell, q)=\chi^{\quasi}_{T_{\Phi^+}}(D_\ell, q)=(q-1)(q-3)\ldots(q-(2\ell-3))(q-(\ell-1)),
\end{equation*}
which agrees with Corollary \ref{cor:ideal-factorization}.
Moreover, for even $q \in \Z_{>0}$ 
   \begin{equation*} 
\chi^{\quasi}_{S_{\Phi^+}}(D_\ell, q)=(q-2)(q-4)\ldots(q-(2\ell-4))\left(q^2-2(\ell-1)q+\frac{\ell(\ell-1)}2\right),
\end{equation*}
which recovers the result of \cite[Theorem 4.8]{KTT07} for type $D$ root systems.
 \end{remark}

With a recent study on characteristic quasi-polynomials and toric arrangements \cite[Corollary 5.6]{LTY17}, our computation gives a full description of the characteristic polynomials of the toric arrangements defined by the ideals.
We complete this section by giving a direct verification of Corollary \ref{cor:ideal-factorization} when $\Phi$ is any classical root system. 
We restrict the discussion to type $D$ root systems as the other cases are easy.
For any ideal $I\subseteq\Phi^+(D_\ell)$ with $\SG(I)=(p_1,\ldots,p_\ell)$ defined in \eqref{eq:di-signed-graphs-D}, we write 
$$p_i=p_i^{(+)}+p_i^{(-)}, \mbox{where, } p_i^{(\pm)}:=\#\{ \epsilon_i \pm\epsilon_{j} \mid \epsilon_i \pm\epsilon_{j} \in I \},$$
for each $1 \le i \le \ell$. 
It is easily seen that $\DP(I)=(d_1,\ldots,d_\ell)$ with 
$$d_i=p_i^{(-)}+p_{i-1}^{(+)}.$$
Here we agree that $p_{0}^{(+)} = 0$.

\begin{theorem}
\label{thm:verify}
Let $I$ be an ideal of $\Phi^+(D_\ell)$. For odd $q \in \Z_{>0}$, we have
  \begin{equation}
  \label{eq:verify}
\chi^{\quasi}_{T_{I}}(D_\ell, q)= \prod_{i=1}^{\ell}\left(q- d_i  \right).
   \end{equation}
\end{theorem}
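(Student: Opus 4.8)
The plan is to dispose of types $B$ and $C$ by a direct greedy count, and then to deduce type $D$ from type $B$ via Lemma \ref{lem:D-to-B}, the only genuinely new input being a polynomial identity among dual partitions. For the reductions I would first use the inductive formulas of this subsection: if $\epsilon_{\ell-1}-\epsilon_\ell\notin I$ the problem collapses (up to a bijection) to type $A$, for which \eqref{eq:verify} is the elementary count of proper colorings of a graph; and if $r>1$ the inductive formula peels off a factor $\prod_{i=1}^{r-1}(q-p_i)$, which together with the elementary fact (immediate from Table \ref{tab:placement} and the definition of $\DP$) that $\DP(I)$ is the concatenation of $(p_1,\dots,p_{r-1})$ with $\DP(R)$ reduces \eqref{eq:verify} for $I$ to \eqref{eq:verify} for the smaller ideal $R$. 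So by induction on $\ell$ we may assume $\epsilon_1\pm\epsilon_\ell\in I$, hence $p_i^{(-)}=\ell-i$ and $d_i=(\ell-i)+p_{i-1}^{(+)}$.

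The first real step is the type $B$ (and, identically, type $C$) version: for an ideal $I'$ of $\Phi^+(B_\ell)$ and odd $q$, $\chi^{\quasi}_{T_{I'}}(B_\ell,q)=\prod_{i=1}^\ell(q-d_i)$ with $\DP(I')=\SG(I')=(d_1,\dots,d_\ell)$. Using the reduction recalled in this subsection we may take $\epsilon_1\in I'$ (if no $\epsilon_i+\epsilon_j$ lies in $I'$ this is \eqref{eq:type-A}), so that $\epsilon_k\in I'$ for all $k$ and $\epsilon_i-\epsilon_j\in I'$ for all $i<j$. I would then evaluate $\chi^{\quasi}_{T_{I'}}(B_\ell,q)$, viewed as the number of $\textbf{z}\in(\Z/q\Z)^\ell$ with $\overline{z_i}\ne\overline{z_j}$ for $\epsilon_i-\epsilon_j\in I'$, with $\overline{z_i}+\overline{z_j}\ne\overline0$ for $\epsilon_i+\epsilon_j\in I'$, and with $\overline{z_i}\ne\overline0$ for $\epsilon_i\in I'$, by choosing $\overline{z_\ell},\overline{z_{\ell-1}},\dots,\overline{z_1}$ in this order. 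When $\overline{z_i}$ is chosen, the forbidden set is $\{\overline0\}\cup\{\overline{z_j}:\epsilon_i-\epsilon_j\in I'\}\cup\{-\overline{z_j}:\epsilon_i+\epsilon_j\in I'\}$, and the point is that these three pieces are pairwise disjoint and have no internal repetitions, so that exactly $q-(d_i^{(0)}+d_i^{(-)}+d_i^{(+)})=q-d_i$ values remain regardless of the earlier choices: indeed $\overline{z_j}\ne\overline0$ is guaranteed by the loop $\epsilon_j\in I'$, so $-\overline{z_j}\ne\overline{z_j}$ because $q$ is odd, and if $\epsilon_i+\epsilon_j,\epsilon_i-\epsilon_{j'}\in I'$ with $j\ne j'$ then the ideal property forces $\epsilon_{\min(j,j')}+\epsilon_{\max(j,j')}\in I'$, whose defining inequality was already imposed at the earlier of the two steps for $\overline{z_j},\overline{z_{j'}}$ (both indices exceed $i$), so $\overline{z_j}\ne-\overline{z_{j'}}$. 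Multiplying over $i$ gives the claim; type $C$ is the same argument with the loops indexed by $\{i:2\epsilon_i\in I\}$.

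For type $D$ in the case $\epsilon_1\pm\epsilon_\ell\in I$, I would invoke Lemma \ref{lem:D-to-B} to write $\chi^{\quasi}_{T_I}(D_\ell,q)=\sum_{k=1}^\ell\chi^{\quasi}_{T_{U_k}}(B_{\ell-1},q)+\chi^{\quasi}_{T_K}(B_\ell,q)$, apply the previous step to each summand ($\chi^{\quasi}_{T_K}(B_\ell,q)=\prod_{i=1}^\ell(q-p_i-1)$ since $\SG(K)=(p_1+1,\dots,p_\ell+1)$, and $\chi^{\quasi}_{T_{U_k}}(B_{\ell-1},q)=\prod_i(q-\SG(U_k)_i)$ with $\SG(U_k)$ given by \eqref{eq:1st-Uk} and \eqref{eq:2nd-Uk}), and then verify the resulting polynomial identity
\begin{equation*}
\sum_{k=1}^{\ell}\prod_i\bigl(q-\SG(U_k)_i\bigr)+\prod_{i=1}^{\ell}(q-p_i-1)=\prod_{i=1}^{\ell}\bigl(q-(\ell-i)-p_{i-1}^{(+)}\bigr).
\end{equation*}
This identity is the step I expect to be the main obstacle. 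I would prove it by induction (on $\ell$, or equivalently on $s$), splitting off an extreme index on both sides and using the monotonicities $\tau_{n,k}\le\tau_{n+1,k}$, $\tau_{n,k}\le\tau_{n,k+1}$ together with the interlacing relations $p_i\le p_{i+1}+1$, $p_i^{(+)}\le p_{i+1}^{(+)}$ ($i\le s-3$) and $p_i=p_{i-1}-2$ ($i\ge s$) recorded in this subsection to match the two sides term by term, the terms $U_k$ being grouped according to whether $k<s-1$ or $k\ge s-1$. As a safeguard, this bookkeeping can be replaced by a deletion-contraction induction on $|I|$: removing a maximal root of $I$ reduces \eqref{eq:verify} for $I$ to \eqref{eq:verify} for two ideals of strictly smaller cardinality, and the factorizations on the two sides then match by a short check against Table \ref{tab:placement}.
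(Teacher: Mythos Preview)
Your overall strategy coincides with the paper's: reduce to the case $\epsilon_1\pm\epsilon_\ell\in I$, apply Lemma~\ref{lem:D-to-B} to rewrite $\chi^{\quasi}_{T_I}(D_\ell,q)$ as a sum over the type-$B$ ideals $U_k$ and $K$, and then verify the resulting polynomial identity. Your explicit greedy count for type $B$ is a genuine addition: the paper dismisses that case as ``easy'' without spelling it out, and your argument (choosing $\overline{z_\ell},\dots,\overline{z_1}$ in order and checking, via the ideal property and oddness of $q$, that the forbidden set at step $i$ has exactly $d_i$ distinct elements) is correct and is precisely what is needed to make Theorem~\ref{thm:verify} independent of Corollary~\ref{cor:ideal-factorization}. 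For the polynomial identity itself the paper is more concrete than your inductive sketch: it first factors out the common divisor $\prod_{i=s}^\ell(q-p_{i-1}+1)$ coming from the full $D_{\ell-s+2}$ subsystem inside every $U_k$ and $K$, reduces to an identity $A+B+C=(q-\ell+1)\prod_{i=2}^{s-1}(q-p_{i-1}+1)$ where $A$, $B$, $C$ correspond to $K$, the type~I $U_k$'s, and the type~II $U_k$'s respectively, telescopes $A+B+C_{s-1}=\prod_{i=1}^{s-1}(q-p_i)$, and then exploits the staircase shape of the matrix $M_\tau=[\tau_{n,k}]$ (the $n$th row has exactly $p_n^{(+)}$ ones, right-justified) to evaluate $\sum_{k=s}^\ell C_k$ as a sum over the jumps $p_{n+1}^{(+)}-p_n^{(+)}$. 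One caution about your fallback: deletion of a maximal root $\beta$ does yield an ideal, but the contraction $T_I/T_{\{\beta\}}$ is not in general the coefficient list of an ideal in any root system, so the inductive hypothesis does not directly apply to it; that route would require more work than you indicate.
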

 \begin{proof} 
It suffices to prove Theorem \ref{thm:verify} when $\epsilon_1\pm\epsilon_{\ell}  \in I$, as the other cases are straightforward.  
For such ideals, $d_1=\ell-1$, $d_i=p_i^{(-)}+p_{i-1}^{(+)}=p_{i-1}-1$ for all  $2 \le i \le \ell$. 
We recall the notation of the parameter $s$ defined in \eqref{eq:s-D} that $s =\min \{ 2 \le k \le \ell \mid \epsilon_{k-1}+\epsilon_k  \in I\}$. 
It follows from Lemma \ref{lem:D-to-B} and Remark \ref{rem:D-recover} that both sides of \eqref{eq:verify} are divisible by $\prod_{i=s}^{\ell}\left(q- p_{i-1}+1 \right)$. 
Hence we need only prove the following:
  \begin{equation}
  \label{eq:verify-need}
  A+B+C=(q-\ell+1)\prod_{i=2}^{s-1}\left(q- p_{i-1}+1\right),
     \end{equation}
where
\begin{align*}
A & := \prod_{i=1}^{s-1}\left(q- p_i-1 \right), \\
B & :=  \sum_{k=1}^{s-2}(q-p_1)\ldots(q-p_{k-1})(q-p_{k+1}-1)\ldots(q-p_{s-1}-1), \\
C & = \sum_{k=s-1}^{\ell}C_k, \mbox{ with } C_k:= \prod_{n=1}^{s-2}\left(q- p_n+\tau_{n,k} \right),
\end{align*}
and $\tau_{n,k}$ is defined in  \eqref{eq:2nd-Uk}. 
Since $\tau_{n,s-1}=0$ for all $1 \le n \le s-2$, $C_{s-1}=\prod_{i=1}^{s-2}\left(q- p_i\right)$. 
It is routine to check that
  \begin{equation}
  \label{eq:verify-routine}
  A+B+C_{s-1}=\prod_{i=1}^{s-1}\left(q- p_i \right).
       \end{equation}
Write $M_\tau=[\tau_{n,k}]$ for a matrix of size $(s-2) \times (\ell-s+1)$ whose entries are the $\tau_{n,k}$'s (the columns indexed by the set $\{s,\ldots,\ell\}$).
Then
$$
M_\tau =
\begin{bmatrix}
0 & \cdots & 0  & \cdots  &\cdots  & 0  & 1 & \cdots  & 1  \\
0 & \cdots & 0  & \cdots  &1 & \cdots   & 1 & \cdots  & 1  \\
\vdots  &  & \vdots  & & \vdots  &    & \vdots  &   & \vdots   \\
0 & \cdots & 0 & 1 & \cdots & \cdots  & 1 & \cdots  & 1  \\
\end{bmatrix},
$$
with the number of $1$'s on the $n$-th row is exactly $p_{n}^{(+)}$, and the entries on the $k$-th column contribute to the evaluation of $C_k$.
Thus
  \begin{equation}
  \label{eq:verify-Ck}
 \sum_{k=s}^{\ell}C_k = \sum_{n=0}^{s-2}\left(p_{n+1}^{(+)}-p_{n}^{(+)} \right)\prod_{i=1}^{n}\left(q- p_i \right)\prod_{i=n+1}^{s-2}\left(q- p_i+1 \right).
       \end{equation}
Now combining \eqref{eq:verify-routine} and \eqref{eq:verify-Ck} with a rigorous check, we obtain \eqref{eq:verify-need}.

 \end{proof}

\noindent
\textbf{Acknowledgements:} 
The author is greatly indebted to Professor Masahiko Yoshinaga for drawing the author's attention to the characteristic quasi-polynomials of the ideals and for many helpful suggestions during the preparation of the paper. 
The author wishes to thank Professor Michele Torielli for helpful comments concerning the signed graphs and thank Ye Liu for stimulating conversations. 
He also gratefully acknowledges the support of the scholarship program of 
the Japanese Ministry of Education, Culture, Sports, Science, and Technology 
(MEXT) under grant number 142506. 

\bibliographystyle{alpha} 
\bibliography{references}

\end{document}